\theoremstyle{plain}
\newtheorem{theorem}{Theorem}[section]
\newtheorem{corollary}[theorem]{Corollary}
\newtheorem{lemma}[theorem]{Lemma}
\newtheorem{proposition}[theorem]{Proposition}
\newtheorem{definition}[theorem]{Definition}
\newtheorem{example}[theorem]{Example}
\theoremstyle{remark}
\newtheorem{remark}{Remark}
\def\bal{\begin{array}{ll}}
\def\eal{\end{array}}
\numberwithin{equation}{section}
\title{Partitions of the polytope of Doubly Substochastic Matrices }
\author{Lei Cao}
\address{School of Mathematics and Statistics, Shandong Normal University, Shandong, 250358, China;\\
Department of Mathematics, Georgian Court University, NJ, 08701, USA}
\email{lcao@georgian.edu}
\author{Zhi Chen$^\ast$}
\address{Department of Mathematics, Nanjing Agricultural University, Jiangsu, 210095, China}
\email{chenzhi@njau.edu.cn }
\thanks{ $^\ast$ Corresponding author. Email: chenzhi@njau.edu.cn}
\subjclass[2010]{15B51; 52B05; 05A18}
\keywords{Set partitions; Doubly substochastic matrices; Transportation polytopes.}
\begin{document}
\maketitle

\begin{abstract}
In this paper, we provide three different ways to partition the polytope of doubly substochastic matrices into subpolytopes via the prescribed row and column sums, the sum of all elements and the sub-defect respectively. Then we characterize the extreme points of each type of convex subpolytopes. The relations of the extreme points of the subpolytopes in the three partitions are also given.
\end{abstract}

\section{Introduction}
\begin{definition}
An $n\times n$ matrix $A=[a_{i,j}]_{n\times n}$ is called a {\sl doubly stochastic matrix} if it satisfies
\begin{enumerate}[(a)]
\item $a_{i,j}\geq 0$, and
\item \label{item2}$\sum_{s=1}^n a_{i,s}=1, \sum_{t=1}^n a_{t,j}=1$,
\end{enumerate}
for all $1\leq i, j\leq n.$
\end{definition}
The definition of {\it doubly substochatic matrices} can be obtained by replacing the equalities in (\ref{item2}) by the inequalities.
\begin{definition}
An $n\times n$  matrix $B=[b_{i,j}]_{n\times n}$ is called a {\sl doubly  substochastic matrix} if it satisfies
\begin{enumerate}[(a)]
\item $b_{i,j}\geq 0$, and
\item \label{item2}$\sum_{s=1}^n b_{i,s}\leq 1, \sum_{t=1}^n b_{t,j}\leq 1$,
\end{enumerate}
for all $1\leq i, j\leq n.$
\end{definition}
Denote the set of all $n\times n$ doubly stochastic matrices by $\Omega_n$, and the set of all $n\times n$ doubly substochastic matrices by $\omega_n$. The set $\Omega_n$ is a convex polytope, and has been intensively studied by many mathematicians~\cite{GPE1980,DIF1981,PMG1966,MK1970,MK1972,MMHM1962,MMHM1964,VDW,Gen2,Gen1,sinkhorn1967,MO,CR}. Specially, the extreme points of $\Omega_n$ are exactly the permutation matrices due to Birkhoff~\cite{GB1946} and von Neumann \cite{VNJ}, which can be stated as follows.
\begin{theorem}{\rm \cite{GB1946,VNJ}}\label{thm1}
An $n\times n$ matrix $A$ is a doubly stochastic matrix if and only if there are finite permutation matrices $P_1,P_2, \cdots, P_N$ and positive numbers $\alpha_1,\cdots,\alpha_N$ such that $\alpha_1+\cdots+\alpha_N=1$ and $A=\alpha_1 P_1+\cdots+\alpha_N P_N.$
\end{theorem}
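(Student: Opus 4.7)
The plan is to prove the two directions separately. The ``if'' direction is immediate: every permutation matrix is doubly stochastic, and the set of doubly stochastic matrices is closed under convex combinations (nonnegativity is preserved, and row/column sums of a convex combination of matrices with unit row/column sums are still unit). So the entire content of the theorem is the ``only if'' direction, and I would proceed by induction on the number $\nu(A)$ of positive entries of a doubly stochastic matrix $A$.

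For the base case, note that $\nu(A)\geq n$ for any doubly stochastic $A$ (each row must contain at least one positive entry), and $\nu(A)=n$ forces $A$ to be a permutation matrix: then each row and column has exactly one positive entry, which must equal $1$ since row and column sums are $1$. For the inductive step, the key technical ingredient is to show that any doubly stochastic $A$ admits a permutation $\sigma\in S_n$ such that $a_{i,\sigma(i)}>0$ for every $i$. I would derive this from Hall's marriage theorem applied to the bipartite graph whose edges are the pairs $(i,j)$ with $a_{i,j}>0$: the Hall condition $|N(S)|\geq |S|$ for every $S\subseteq\{1,\dots,n\}$ holds because the sum of entries in the rows indexed by $S$ equals $|S|$, while those entries lie in the columns indexed by $N(S)$, whose column sums total at most $|N(S)|$; hence $|N(S)|\geq|S|$, and a perfect matching, i.e.\ a permutation $\sigma$, exists.

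Once such a $\sigma$ is in hand, set $P_\sigma$ to be the corresponding permutation matrix and let $\alpha=\min_i a_{i,\sigma(i)}>0$. If $\alpha=1$ then $A=P_\sigma$ and we are done. Otherwise $\alpha<1$, and I would consider
\[
A' \;=\; \frac{1}{1-\alpha}\bigl(A-\alpha P_\sigma\bigr).
\]
A short check shows $A'$ is nonnegative (subtraction is safe because $\alpha\leq a_{i,\sigma(i)}$), has row and column sums equal to $1$, and satisfies $\nu(A')<\nu(A)$ because the index $i_0$ achieving the minimum $\alpha=a_{i_0,\sigma(i_0)}$ contributes a newly-zero entry while no previously zero entries become positive. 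By the inductive hypothesis, $A'=\sum_j \beta_j P_j$ with $\beta_j>0$, $\sum_j\beta_j=1$, and $P_j$ permutation matrices, so
\[
A \;=\; \alpha P_\sigma + (1-\alpha)\sum_{j}\beta_j P_j,
\]
which is the desired convex combination.

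The main obstacle is the existence of the permutation $\sigma$ with $a_{i,\sigma(i)}>0$ throughout; everything else is bookkeeping. An alternative route is to argue that $\Omega_n$ is a compact convex polytope and identify its extreme points as the $0/1$ doubly stochastic matrices (which are exactly the permutation matrices), then invoke the Minkowski--Carath\'eodory theorem; but characterizing the extreme points still essentially reduces to the same Hall-type combinatorial fact, so I would favor the direct inductive proof above.
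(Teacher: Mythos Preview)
The paper does not actually prove this theorem: it is quoted as a classical result of Birkhoff and von~Neumann, with citations to \cite{GB1946,VNJ}, and no argument is supplied. So there is nothing in the paper to compare your proposal against.

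That said, your proposal is correct and is precisely the standard proof. The ``if'' direction is trivial as you note. For the ``only if'' direction, your induction on the number $\nu(A)$ of positive entries, with the existence of a positive diagonal coming from Hall's marriage theorem (the Hall condition being forced by the row/column sum constraints), followed by peeling off $\alpha P_\sigma$ with $\alpha=\min_i a_{i,\sigma(i)}$, is exactly the classical argument. The check that $A'=(A-\alpha P_\sigma)/(1-\alpha)$ is again doubly stochastic with strictly fewer positive entries is routine and you have stated it correctly. Your alternative suggestion---identify extreme points of $\Omega_n$ as permutation matrices and invoke Minkowski--Carath\'eodory---is also valid and is the formulation the paper implicitly leans on elsewhere (e.g.\ in its discussion of extreme points of $\omega_n(R,S)$ and $\omega_n^s$), but as you observe it ultimately rests on the same combinatorial fact.
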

The set $\omega_n$ is also a convex polytope and its extreme points are partial permutation matrices~\cite{MI1959}, i.e., matrices with at most one element in each row and each column equal to one and other elements zero. Since $\Omega_n \subseteq \omega_n$, one may wonder if the classical results of doubly stochastic matrices can be extended to doubly substochastic matrices. However, the question becomes meaningless sometimes, because the results are either the same or trivial even though $\omega_n$ is a much bigger polytope. For example, the maximum diagonal or the upper bound of Frobenius norm would be the same no matter either $\Omega_n$ or $\omega_n$ is considered, while the minimal diagonal of $\omega_n$ is simply zero since zero matrix is contained in $\omega_n$. The reason is that it is too coarse to consider those characteristics on the entire polytope of doubly substochastic matrices.
Therefore we try to divide $\omega_n$ into subsets on which those characteristics become more meaningful.

We consider three different ways to partition $\omega_n,$ through which one may extend the classical results of $\Omega_n$ to $\omega_n$. Some notations will be introduced in the next section. This paper is organized as follows. In Section~\ref{sec0}, we introduce three partitions of $\omega_n$ and three types of subpolytopes $\omega_n(R,S), \omega_n^s$ and $\omega_{n,k}$, which are obtained from these three partitions respectively. In Section~\ref{sec1}, the extreme points of the $\omega_n(R,S)$ are characterized via the connection between $\omega_n(R,S)$ and transportation polytopes. In Section~\ref{sec2} and Section~\ref{sec3}, we characterize the extreme points of $\omega_n^s$ and show the results in two different approaches. Via the extreme points of $\omega_n^s$,  we characterize the extreme points of the subpolytope $\omega_{n,k}$ in Section~\ref{sec4}. In the end we summarize some results of doubly substochastic matrices as extensions of the results of doubly stochastic matrices in Section~\ref{sec5}.

\section{Three different partitions of $\omega_n$}\label{sec0}
The first way to partition $\omega_n$ is induced by a characteristic of doubly substochastic matrices called sub-defect~\cite{CKP2016,CK2017, LeiGLA}. \begin{definition}{\rm (\cite{CKP2016})}
The {\sl sub-defect} of an $n\times n$ doubly substochastic matrix $B$, denoted by $sd(B),$ is defined to be the smallest integer $k$ such that there exists an $(n + k) \times (n + k)$ doubly stochastic matrix containing $B$ as a submatrix.
\end{definition}
Denote the sum of all elements of a matrix $B=[b_{i,j}]$ by $\sigma(B)$, i.e.,
\begin{equation}\label{SDE}
\sigma(B) =\sum_{i=1}\sum_{j=1} b_{i,j}.
\end{equation}
It has been shown that the sub-defect $k$ can be calculated easily by taking the ceiling of the difference of its size and the sum of all elements.
\begin{theorem}{\rm (Theorem 2.1, \cite{CKP2016})}\label{thm2.1}
Let $B = [b_{i,j}]$ be an $n \times n$ doubly substochastic
matrix. Then $$sd(B) =\lceil n-\sigma(B) \rceil.$$   where $\lceil x \rceil$ is the ceiling of $x.$
\end{theorem}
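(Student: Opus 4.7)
The plan is to prove the equality $sd(B) = \lceil n - \sigma(B)\rceil$ by showing both inequalities, first a lower bound via a sum-counting argument and then a matching upper bound via an explicit construction.

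For the lower bound, suppose $B$ has been embedded as the upper-left $n\times n$ block of some $(n+k)\times(n+k)$ doubly stochastic matrix $A=\begin{pmatrix}B&C\\ D&E\end{pmatrix}$, with $C$ of size $n\times k$, $D$ of size $k\times n$, and $E$ of size $k\times k$. The total sum of $A$ is $n+k$. Since the first $n$ rows of $A$ sum to $n$, we get $\sigma(B)+\sigma(C)=n$, hence $\sigma(C)=n-\sigma(B)$; symmetrically $\sigma(D)=n-\sigma(B)$. The last $k$ rows sum to $k$, so $\sigma(D)+\sigma(E)=k$, giving $\sigma(E)=k-(n-\sigma(B))$. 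Because $E$ is nonnegative, $\sigma(E)\ge 0$, so $k\ge n-\sigma(B)$; since $k$ is an integer, $k\ge\lceil n-\sigma(B)\rceil$, proving $sd(B)\ge\lceil n-\sigma(B)\rceil$.

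For the upper bound, I will exhibit an extension with exactly $k=\lceil n-\sigma(B)\rceil$ extra rows and columns. Write $r_i=1-\sum_{s=1}^n b_{i,s}\ge 0$ and $c_j=1-\sum_{t=1}^n b_{t,j}\ge 0$ for the row- and column-deficiencies of $B$; note $r_i,c_j\in[0,1]$ and $\sum_i r_i=\sum_j c_j=n-\sigma(B)$. In the degenerate case $k=0$ we must have $\sigma(B)=n$, which forces $B\in\Omega_n$ and needs no extension. Otherwise, set $\alpha=1-(n-\sigma(B))/k$, which is nonnegative because $k\ge n-\sigma(B)$, and define
\[
C_{i,j}=\frac{r_i}{k}\ (1\le i\le n,\ 1\le j\le k),\qquad D_{i,j}=\frac{c_j}{k}\ (1\le i\le k,\ 1\le j\le n),\qquad E=\alpha I_k.
\]

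A direct check then confirms that the assembled matrix $A$ is doubly stochastic: the $i$-th row of $[B\ C]$ sums to $(1-r_i)+k\cdot(r_i/k)=1$, the $j$-th column of $\begin{pmatrix}B\\ D\end{pmatrix}$ sums to $(1-c_j)+k\cdot(c_j/k)=1$, and for any new row or column the contribution from $C$ or $D$ is $(n-\sigma(B))/k$, which combines with the diagonal $\alpha$ from $E$ to give exactly $1$. Nonnegativity of every entry is automatic. This produces the required extension and establishes $sd(B)\le\lceil n-\sigma(B)\rceil$, completing the proof.

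The only subtle point is verifying that the column sums of $C$ and row sums of $D$ do not exceed $1$ (otherwise $E$ could not be made nonnegative); this is precisely the condition $\alpha\ge 0$, which is guaranteed by the choice $k=\lceil n-\sigma(B)\rceil$. Thus the ceiling function is exactly what is needed to balance the extension, and no finer choice is possible.
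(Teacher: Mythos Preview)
Your proof is correct. Note, however, that the paper under review does not itself prove this theorem; it is quoted as Theorem~2.1 from~\cite{CKP2016} and used as a tool. So there is no in-paper proof to compare against.

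On the substance: your lower bound via the block-sum identity $\sigma(E)=k-(n-\sigma(B))\ge 0$ is clean and standard. Your upper bound construction---spreading each row deficiency $r_i$ evenly across the $k$ new columns, each column deficiency $c_j$ evenly across the $k$ new rows, and taking $E=\alpha I_k$ with $\alpha=1-(n-\sigma(B))/k$---works exactly as you claim. All row and column sums of the completed matrix equal $1$, and nonnegativity reduces precisely to $\alpha\ge 0$, which is guaranteed by $k=\lceil n-\sigma(B)\rceil$. The degenerate case $k=0$ (forcing $B\in\Omega_n$) is handled. There are no gaps.
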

Denote by $\omega_{n,k}$ the set of all $n \times n$ doubly substochastic matrices with sub-defect $k,$ i.e.,
\begin{equation} \label{p3}
\omega_{n,k}=\{B\in\omega_n \ | \ sd(B)=k\}.
\end{equation}
Let \begin{equation}\label{part1}\mathcal{P}_1=\{\omega_{n,0}, \omega_{n,1},\ldots, \omega_{n,n}\}.\end{equation} Then $\mathcal{P}_1$ is a partition of $\omega_n$ since
\begin{enumerate}
\item $\emptyset \notin \mathcal{P}_1;$
\item $\omega_{n,i}\cap \omega_{n,j}=\emptyset$ for $i\neq j;$
\item $\bigcup_{i=0}^n\omega_{n,i}=\omega_n.$
\end{enumerate}
The second way to partition $\omega_n$ is using the sum of all elements. Let $0\leq s\leq n$, denote by $\omega_n^s$ the set of all matrices in $\omega_n$ such that the sum of all elements equals to $s$,  i.e.,
\begin{equation} \label{p2}
\omega_n^s=\{B\in\omega_n \ | \ \sigma(B)=s\}.
\end{equation}
Let \begin{equation}\label{part2} \mathcal{P}_2= \{ \omega_n^s \ |\  0\leq s\leq n \}. \end{equation} Then $\mathcal{P}_2$ is a partition of $\omega_n,$ because
\begin{enumerate}
\item $\emptyset\notin \mathcal{P}_2;$
\item $\omega_n^s \cap \omega_n^t =\emptyset$, if $s\neq t;$
\item $\displaystyle \bigcup_{0\leq s\leq n}\omega_n^s=\omega_n$.
\end{enumerate}
The third way to partition $\omega_n$ is triggered by a special case of the {\sl transportation polytopes}~\cite{EB1972}, in the case when they are square matrices. Transportation polytopes are formed by all nonnegative matrices (not necessarily square) with prescribed row sums and column sums. For an $n\times n$ non-negative matrix $A$, denote the $i$th row sum of $A$ by $r_i(A)$, and the $j$th column sum by $s_j(A).$  
Let $R=(r_1, r_2, \ldots, r_n)\in \mathbb{R}^n$, and $S=(s_1, s_2, \ldots, s_n)\in \mathbb{R}^n.$ For such a pair of vectors $R$ and $S,$ we always assume they satisfy \begin{equation} \label{con1} 0 \leq r_i,s_j \leq 1 \ \ {\rm for\ all} \ i,j=1,2,\ldots, n\end{equation} and the compatible condition
\begin{equation}\label{con2}
|R|=\sum_{i=1}^n r_i=\sum_{i=1}^n s_i=|S|.
\end{equation}
Denote by $\omega_n(R,S)$ the set of the non-negative $n\times n$ matrices with row sum vector $R$ and column sum vector $S,$  that is
\begin{equation}\label{P1}
\omega_n(R,S)=\{ A \in\omega_n\ | \ r_i(A)=r_i\  {\rm and} \ s_j(A)=s_j,\ {\rm for}\ 1\leq i,j\leq n \}.
\end{equation}
Let \begin{equation}\label{part3} \mathcal{P}_3=\{ \ \omega_n(R,S)\  | \  R\ {\rm and}\ S\ {\rm satisfying}\ \eqref{con1}\ {\rm and}\ \eqref{con2} \}.\end{equation}
Then $\mathcal{P}_3$ is a partition of $\omega_n,$ because
\begin{enumerate}[(i)]
\item $\emptyset \notin \mathcal{P}_3;$
\item $\displaystyle\bigcup_{R,S} \omega_n(R,S) =\omega_n$ where $R$ and $S$ runs over all pairs of vectors in $\mathbb{R}^n$ satisfying \eqref{con1} and \eqref{con2};
\item $\omega_n(R,S)\bigcap \omega_n(R',S')=\emptyset$ if and only if $(R,S)\neq (R',S').$
\end{enumerate}
The relations of the three partitions are shown as follows.
\begin{proposition}\label{Leiprop1} Let $\mathcal{P}_1, \mathcal{P}_2$ and $\mathcal{P}_3$ be partitions of $\omega_n$ defined by \eqref{part1}, \eqref{part2} and \eqref{part3} respectively, then
\begin{enumerate}[(i)]
\item $\mathcal{P}_2$ is a refinement of $\mathcal{P}_1.$
\item $\mathcal{P}_3$ is a refinement of $\mathcal{P}_2.$
\item $\mathcal{P}_3$ is a refinement of $\mathcal{P}_1.$
\end{enumerate}
\end{proposition}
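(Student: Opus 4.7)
The plan is to verify each refinement claim by showing that every block of the finer partition lies inside a single block of the coarser one. Each follows directly from the definitions combined with Theorem~\ref{thm2.1}, and the third is a formal consequence of the first two.

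For (i), I would fix an arbitrary block $\omega_n^s \in \mathcal{P}_2$ and pick any $B \in \omega_n^s$. By definition $\sigma(B)=s$, so Theorem~\ref{thm2.1} gives $sd(B)=\lceil n-s\rceil$, a value that depends only on $s$. Hence every element of $\omega_n^s$ has the same sub-defect $k:=\lceil n-s\rceil$, which gives $\omega_n^s\subseteq \omega_{n,k}\in\mathcal{P}_1$. Since $s\in[0,n]$ the index $k$ lies in $\{0,1,\dots,n\}$, so the containing block is indeed one listed in \eqref{part1}.

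For (ii), take a block $\omega_n(R,S)\in\mathcal{P}_3$ and any $B\in\omega_n(R,S)$. Summing all entries row by row,
\begin{equation*}
\sigma(B)=\sum_{i=1}^n r_i(B)=\sum_{i=1}^n r_i=|R|,
\end{equation*}
and by the compatibility condition \eqref{con2} this equals $|S|$. Thus every matrix in $\omega_n(R,S)$ has the same entry sum $s:=|R|\in[0,n]$, so $\omega_n(R,S)\subseteq \omega_n^s\in\mathcal{P}_2$.

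Finally, (iii) requires only the transitivity of the refinement relation: combining (i) and (ii), any $\omega_n(R,S)\in\mathcal{P}_3$ is contained in some $\omega_n^s\in\mathcal{P}_2$, which in turn is contained in some $\omega_{n,k}\in\mathcal{P}_1$, so $\omega_n(R,S)\subseteq\omega_{n,k}$. There is no genuine obstacle here; the only thing to be careful about is invoking Theorem~\ref{thm2.1} to reduce the sub-defect to a function of $\sigma(B)$ alone, and checking that the resulting indices $s$ and $k$ actually appear among the blocks listed in \eqref{part2} and \eqref{part1}, which is immediate from $0\le|R|\le n$.
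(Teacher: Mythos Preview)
Your proof is correct and follows essentially the same approach as the paper: you show each block of the finer partition sits inside a single block of the coarser one, while the paper phrases the same fact dually by writing each coarser block as a union of finer ones (equations~\eqref{eq:nks} and~\eqref{eq:relsRS}). Both arguments rest on Theorem~\ref{thm2.1} for (i), the row-sum identity $\sigma(B)=|R|$ for (ii), and transitivity for (iii).
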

\begin{proof}
(i) According to Theorem~\ref{thm2.1}~\cite{CKP2016}, if $B\in \omega_{n,k},$ then $\sigma(B)$ is in $[n-k,n-k+1)$ for $1\leq k \leq n.$ Therefore,
\begin{equation}\label{eq:nks}
\omega_{n,k}=\bigcup_{s\in [n-k,n-k+1)} \omega_n^s
\end{equation}
for $1\leq k \leq n.$ In particular when $k=0$, we have $$\omega_{n,0}=\omega_n^n.$$

(ii) It is due to that for all $0\leq s\leq n$,
\begin{equation}\label{eq:relsRS}
\omega_n^s=\bigcup_{|R|=|S|=s \atop 0\leq r_i, s_j \leq 1\ } \omega_n(R,S).
\end{equation}

(iii) It is a direct consequence of (i) and (ii).
\end{proof}
Moreover, every subset in each partition is convex. That means all subsets in the three partitions $\mathcal{P}_1, \mathcal{P}_2$ and $\mathcal{P}_3$ are subpolytopes of $\omega_n$. The following Proposition can be verified by direct calculation.
\begin{proposition}\label{Leiprop2}
Let $\omega_{n,k}$, $\omega_n^s$ and $\omega_n(R,S)$ be the subpolytopes of $\omega_n$ as defined by \eqref{p3}, \eqref{p2} and \eqref{P1} respectively. Then we have
\begin{enumerate}[(i)]
\item $\omega_n(R,S)$ is convex for each pair $R$ and $S$ satisfying \eqref{con1} and $\eqref{con2};$
\item $\omega_n^s$ is convex for each $0\leq s\leq n;$
\item $\omega_{n,k}$ is convex for each $k=0,1,\ldots,n.$
\end{enumerate}
\end{proposition}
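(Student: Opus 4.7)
The plan is to verify each of the three items by a direct check that the defining constraints of the subpolytope are preserved under convex combinations. The key observation is that the row sum $r_i(\cdot)$, the column sum $s_j(\cdot)$, and the total sum $\sigma(\cdot)$ are all linear functionals on the space of $n\times n$ matrices, while nonnegativity of entries is plainly preserved under any convex combination. Together these two facts make (i) and (ii) essentially automatic.

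More precisely, for (i), given $A,B\in\omega_n(R,S)$ and $\lambda\in[0,1]$, the matrix $C=\lambda A+(1-\lambda)B$ is nonnegative, and by linearity has row sums $r_i$ and column sums $s_j$, so $C\in\omega_n(R,S)$. For (ii), the same linearity argument gives $\sigma(C)=\lambda s+(1-\lambda)s=s$; one also notes that $C\in\omega_n$ because each row (resp.\ column) sum of $C$ is a convex combination of numbers in $[0,1]$ and $[0,1]$ is itself convex.

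The only item that requires a small additional step is (iii), since the sub-defect condition $sd(B)=k$ is not on its face an affine constraint on the entries of $B$. The remedy is to invoke Theorem~\ref{thm2.1}, exactly as in the proof of Proposition~\ref{Leiprop1}: for $k\geq 1$ the condition $B\in\omega_{n,k}$ is equivalent to $B\in\omega_n$ together with $\sigma(B)\in[n-k,\,n-k+1)$, while $\omega_{n,0}=\omega_n^n$, whose convexity is a special case of (ii). Since the half-open interval $[n-k,\,n-k+1)$ is convex and $\sigma$ is affine, a convex combination of two matrices in $\omega_{n,k}$ still has its $\sigma$-value in $[n-k,\,n-k+1)$, and still lies in $\omega_n$ by the $[0,1]$-convexity argument of (ii); hence (iii) reduces to the reformulation provided by \eqref{eq:nks}.

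I expect no genuine obstacle in this proposition; it is essentially a bookkeeping exercise. The single point that deserves a moment of care is in (iii), where one must check that the \emph{strict} upper endpoint $n-k+1$ is preserved: this is automatic because a convex combination of two reals strictly less than $n-k+1$ is again strictly less than $n-k+1$. Once this is observed, the entire proposition follows from the linearity of $r_i$, $s_j$, and $\sigma$.
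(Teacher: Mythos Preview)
Your proposal is correct and is precisely the ``direct calculation'' the paper alludes to (the paper gives no explicit proof beyond that remark). The only subtlety you rightly flag---that the strict inequality $\sigma(B)<n-k+1$ persists under convex combinations---is handled correctly, so nothing more is needed.
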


An interesting fact is that the polytope $\Omega_n$ is a subpolytope of $\omega_n$ with regard to all these three partitions. In fact, $$\Omega_n=\omega_{n,0}=\omega_n^n=\omega(\mathbf{e_n},\mathbf{e_n})$$ where $\mathbf{e_n}=(1,1,1,\ldots,1)\in \mathbb{R}^n.$

\section{Extreme points of $\omega_n(R,S)$}\label{sec1}
The extreme points of transportation polytopes were characterized by Jurkat and Ryser~\cite{EB1972}. As a special case of transportation polytope, the set of extreme points of $\omega_n(R,S)$, denoted by $\mathfrak{E}_n(R,S),$ can be obtained consequently.

For $A\in \omega_n$, denote by $G_A$ the bipartite graph corresponding to $A$. The vertex set is the set of indices of the rows $I=\{i_1,i_2, \ldots i_n\}$ and columns $J=\{j_1, j_2, \ldots, j_n\}$. The edges are the places of the matrix in which the entries are positive. A {\it path} is a sequence of distinct vertices such that there exists an edge between two consecutive vertices. Also denote by  $\mathcal{B}(A)$ the $(0,1)$-matrix corresponding to $A,$ that is replacing all positive entries in $A$ by $1.$
We can then state the results about $\mathfrak{E}_n(R,S)$ as the following proposition.
\begin{proposition}\label{PropLei1}{\rm \cite{EB1972}}
If $A\in \omega_n(R,S),$ then the following conditions are equivalent:
\begin{enumerate}[(i)]
\item $A\in \mathfrak{E}_n(R,S).$
\item Every submatrix of $A$ contains a line with at most one positive entry.
\item Every submatrix $A¡ä$ of $A$ of size $m\times l$ has at most $m+l-1$ positive entries.
\item There is no matrix $B\in \omega_n(R,S)$ such that $B\neq A$ and $\mathcal{B}(B) =\mathcal{B}(A).$
\item $G_A$ is a forest with no isolated vertex.
\end{enumerate}
\end{proposition}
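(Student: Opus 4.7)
The plan is to establish the five equivalences by proving the cycle (i) $\Leftrightarrow$ (iv) $\Leftrightarrow$ (v) $\Leftrightarrow$ (ii) $\Leftrightarrow$ (iii). The conceptual heart is the bridge (iv) $\Leftrightarrow$ (v), which translates the linear-algebraic uniqueness statement ``$A$ is the only matrix in $\omega_n(R,S)$ with its support pattern'' into the combinatorial acyclicity of $G_A$. The other equivalences are then either standard convex-perturbation arguments or purely graph-theoretic reformulations.

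I would first handle (i) $\Leftrightarrow$ (iv) via the usual perturbation trick: if $B \neq A$ shares the support $\mathcal{B}(A)$, then $C := B - A$ is a nonzero matrix with zero row and column sums supported within the support of $A$, and since $A$ is strictly positive on that support we get $A \pm \epsilon C \in \omega_n(R,S)$ for small $\epsilon > 0$, breaking extremality; conversely a nontrivial convex representation $A = \lambda A_1 + (1 - \lambda) A_2$ with $A_1 \neq A_2$ in $\omega_n(R,S)$ forces the support of each $A_i$ to lie in the support of $A$ by nonnegativity, and a tiny push of $A$ toward $A_1$ produces the required $B$. For (iv) $\Leftrightarrow$ (v), I would identify such a $B$ with a nonzero real-valued flow on the edges of $G_A$ having zero divergence at each vertex: a cycle in $G_A$ (necessarily of even length, by bipartiteness) supports such a flow via alternating $+1, -1$ weights around the cycle; conversely, in a forest one iteratively peels off pendant edges, each of which must carry flow zero because the zero-divergence condition at a leaf vertex leaves only one edge to absorb the sum.

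The combinatorial equivalences (v) $\Leftrightarrow$ (ii) $\Leftrightarrow$ (iii) follow from the dictionary: a submatrix of $A$ of size $m \times l$ corresponds to an induced subgraph of $G_A$ on $m$ row-vertices and $l$ column-vertices; a line with at most one positive entry corresponds to an induced-subgraph vertex of degree at most one; and a forest on $m + l$ vertices has at most $m + l - 1$ edges, with the converse direction following because any cycle immediately produces an induced subgraph violating both (ii) and (iii). The main obstacle I anticipate is the bookkeeping in the leaf-stripping step of (v) $\Rightarrow$ (iv): one must check that removing a pendant edge of $G_A$ yields a reduced bipartite graph on which the remaining edge weights still form a zero-divergence flow supported on a forest, so the induction goes through; this is routine but is the only place where the bipartite structure is genuinely essential. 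Isolated vertices of $G_A$ correspond to zero rows or columns of $A$, which is compatible with the ``no isolated vertex'' clause once we restrict attention to the indices with $r_i > 0$ or $s_j > 0$.
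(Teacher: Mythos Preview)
The paper does not prove this proposition at all: it is quoted from Bolker's transportation-polytope paper \cite{EB1972} and stated without argument, so there is no ``paper's own proof'' to compare against. Your outline is the standard one for this classical result (essentially the argument in Jurkat--Ryser and Bolker), and it is correct: the perturbation argument for (i)\,$\Leftrightarrow$\,(iv), the zero-divergence-flow/cycle correspondence for (iv)\,$\Leftrightarrow$\,(v), and the induced-subgraph dictionary for (v)\,$\Leftrightarrow$\,(ii)\,$\Leftrightarrow$\,(iii) are exactly what one does.

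One small point worth tightening: your closing remark about isolated vertices is slightly muddled. The clause ``with no isolated vertex'' in (v) is not something you prove from the other conditions; it holds automatically whenever every $r_i>0$ and every $s_j>0$, since each row and column of $A$ must then contain a positive entry. If some $r_i=0$ or $s_j=0$, the equivalence (i)\,$\Leftrightarrow$\,(v) as literally stated fails (the zero row forces an isolated vertex while $A$ can still be extreme), and the honest fix is exactly the one you gesture at: pass to the submatrix on the indices with positive marginals, or simply add the standing hypothesis $r_i,s_j>0$. State this explicitly rather than leaving it as a parenthetical.
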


Denote by $\mathfrak{E}(\omega_n^s)$ the set of extreme points of $\omega_n^s$ and $\mathfrak{E}(\omega_{n,k})$ the set of extreme points of $\omega_{n,k}.$ Due to Proposition \ref{Leiprop1}, one can obtain the following inclusion relations among $\mathfrak{E}_{n}(R,S), \mathfrak{E}(\omega_n^s)$  and $\mathfrak{E}(\omega_{n,k}).$
\begin{proposition}\label{prop:increl}
\begin{enumerate} [(i)]
\item For $0 \leq s \leq n$, we have
\begin{equation*}
\mathfrak{E}(\omega_n^s)\subset\bigcup_{|R|=|S|=s \atop 0\leq r_i, s_j \leq 1\ } \mathfrak{E}_n(R,S);
\end{equation*}

\item For $0<k\leq n$, we have
\begin{equation*}
\mathfrak{E}(\omega_{n,k})\subset\bigcup_{s\in [n-k,n-k+1)} \mathfrak{E}(\omega_n^s).
\end{equation*}
\end{enumerate}
\end{proposition}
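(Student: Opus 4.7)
The plan is to prove both inclusions via the same elementary observation about partitions: whenever a convex set $P$ is partitioned into convex subsets $\{P_i\}$, every extreme point of $P$ is also an extreme point of the unique piece $P_i$ containing it. Indeed, if $x\in P_i$ and $x=\lambda y+(1-\lambda)z$ with $y,z\in P_i\subseteq P$ and $\lambda\in(0,1)$, then extremality of $x$ in $P$ forces $y=z=x$, so $x$ is extreme in $P_i$ as well.

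For part (i), I would take $A\in\mathfrak{E}(\omega_n^s)$ and let $R=(r_1(A),\ldots,r_n(A))$ and $S=(s_1(A),\ldots,s_n(A))$. Since $A\in\omega_n^s\subseteq\omega_n$, the pair $(R,S)$ satisfies \eqref{con1} and \eqref{con2} with $|R|=|S|=\sigma(A)=s$, and $A\in\omega_n(R,S)$. By Proposition \ref{Leiprop2}(i) each $\omega_n(R',S')$ is convex, and by \eqref{eq:relsRS} these pieces partition $\omega_n^s$ as $(R',S')$ ranges over admissible pairs with $|R'|=|S'|=s$. The general observation above then gives $A\in\mathfrak{E}_n(R,S)$, which lies inside the claimed union.

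For part (ii), I would take $A\in\mathfrak{E}(\omega_{n,k})$ and set $s=\sigma(A)$. By Theorem \ref{thm2.1} we have $s\in[n-k,n-k+1)$ and $A\in\omega_n^s$; by Proposition \ref{Leiprop2}(ii) each $\omega_n^{s'}$ is convex, and by \eqref{eq:nks} the pieces $\{\omega_n^{s'}\}_{s'\in[n-k,n-k+1)}$ form a partition of $\omega_{n,k}$. The same observation yields $A\in\mathfrak{E}(\omega_n^s)$, as required.

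No real obstacle arises, since the substantive content—convexity of each $\omega_n(R,S)$, $\omega_n^s$, $\omega_{n,k}$, together with their being pairwise disjoint and covering the larger polytope—has already been established in Propositions \ref{Leiprop1} and \ref{Leiprop2}. The proposition is thus a formal consequence of those facts together with the one-line extremality argument sketched in the first paragraph.
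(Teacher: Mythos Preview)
Your argument is correct and follows the same route the paper intends: the paper does not spell out a proof but simply attributes the proposition to Proposition~\ref{Leiprop1}, i.e., to the refinement relations \eqref{eq:relsRS} and \eqref{eq:nks}, and your proposal fills in exactly the elementary ``extreme in the larger set $\Rightarrow$ extreme in any subset containing the point'' step that makes this precise. One minor remark: for that implication you only need $P_i\subseteq P$, so the convexity of the pieces (Proposition~\ref{Leiprop2}) is not actually required here, though it does no harm to cite it.
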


\begin{example}
Let $$A=\begin{pmatrix}0.9 & & \\ & 0.9 & \\ &&0.6 \end{pmatrix}.$$
Then $A\in \omega_3(R,S),$ where $R=S=(0.9,0.9,0.6)$. We also notice that $A\in \omega_3^{2.4}$ and $A\in \omega_{3,1}.$
According to Proposition \ref{PropLei1}, $A$ is an extreme point of $\omega_3(R,S).$ However, $A$ is not an extreme point of $\omega_3^{2.4},$ and hence not an extreme point of $\omega_{3,1}.$ Indeed,
$$A=\begin{pmatrix}0.9 & & \\ & 0.9 & \\ &&0.6 \end{pmatrix}=\frac{1}{2}\begin{pmatrix}1 & & \\ & 1 & \\ &&0.4 \end{pmatrix}+\frac{1}{2}\begin{pmatrix}0.8 & & \\ & 0.8 & \\ &&0.8 \end{pmatrix}.$$
\end{example}

\section{Extreme points of  $\omega_{n}^s$}\label{sec2}
A matrix $A$ is an extreme point of $\omega_n^s$ provided that every convex decomposition of the form
\begin{equation*}
A=\lambda A_1+(1-\lambda)A_2,\ (0\leq\lambda\leq 1)
\end{equation*}
with $A_1$ and $A_2$ in the class $\omega_n^s$ implies that $A_1=A_2=A$.
Before characterize the set of extreme points of $\omega_n^s$, i.e. $\mathfrak{E}(\omega_n^s)$, we first give some preliminaries.

For a positive integer $n$ and $0\leq s\leq n$, denote by $\lfloor s\rfloor$ the greatest integer less than or equal to $s$. Let
\begin{equation*}
v_n^s=(\underbrace{1,1,\ldots,1}_{\lfloor s\rfloor},s-\lfloor s\rfloor,0,\ldots,0)\in \mathbb{R}^n,
\end{equation*}
which contains $\lfloor s\rfloor 1$'s and satisfies $|v_n^s|=s$.
Denote by $\mathcal{RE}(v_n^s)$ the set of all rearrangements of $v_n^s$, i.e.,
\begin{equation*}
\mathcal{RE}(v_n^s)=\{v\in\mathbb{R}^n:\ \exists\ \pi\in S_n, \pi(v)=(v_{\pi(1)},v_{\pi(2)},\ldots, v_{\pi(n)})=v_n^s\}.
\end{equation*}
For $0\leq\alpha\leq 1$, let $B_m(\alpha)$ be the $m\times m$ matrix in the form:
\begin{equation} \label{leieq1}
B_m(\alpha)=\begin{pmatrix}
\alpha & 0 & \cdots & 0 \\
1-\alpha & \alpha & \cdots & 0\\
\vdots & \ddots & \ddots &\vdots\\
0      &  \cdots &  1-\alpha & \alpha
\end{pmatrix}.
\end{equation}
Notice that $\sigma(B_m(\alpha))=m-1+\alpha$, $R(B_m(\alpha))=(\alpha, 1,\ldots, 1)$ and $S(B_m(\alpha))=(1,\ldots,1,\alpha)$.
In the case $m=1$ we have $B_1(\alpha)=(\alpha)$.

\begin{theorem} \label{main}
Let $A\in \omega_n^s.$ The following statements are equivalent:
\begin{enumerate}[(a)]
\item \label{i1}$A\in \mathfrak{E}(\omega_n^s).$
\item \label{i2}There exist $R, S\in {\mathcal{RE}}(v_n^s),$ such that $A\in \mathfrak{E}_n(R,S).$
\item \label{i3}There exist $n\times n$ permutation matrices $P$ and $Q,$ such that
$$PAQ=I_{\lceil s\rceil-m}\oplus B_m(s+1-\lceil s\rceil)\oplus O_{n-\lceil s\rceil}$$
where $I_{\lceil s\rceil-m}$ is the identity matrix of size $\lceil s\rceil-m$ and $O_{n-\lceil s\rceil}$ is the zero matrix of size $n-\lceil s\rceil$.
\item \label{i4}Each connected component of $G_A$ is either an isolated vertex or a path.
For $s$ not an integer, there exists one path with length $1\leq m\leq  \lceil s\rceil$ and $\lceil s\rceil-m$ paths with length $1$ in $G_A$.
When $s$ is an integer, there are $s$ paths with length $1$ in $G_A$.
\end{enumerate}
\end{theorem}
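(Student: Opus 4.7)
The plan is to close the cycle of implications (c) $\Rightarrow$ (b) $\Rightarrow$ (a) $\Rightarrow$ (d) $\Rightarrow$ (c).

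The implication (c) $\Rightarrow$ (b) is a direct check: if $A$ has the stated block form then its row-sum vector and column-sum vector can be read off the three blocks and shown to lie in $\mathcal{RE}(v_n^s)$ with fractional part $\alpha = s+1-\lceil s\rceil$, while $G_A$ is a disjoint union of isolated edges, a single bipartite path associated to $B_m(\alpha)$, and isolated row/column vertices, hence a forest. Proposition \ref{PropLei1} then yields $A\in \mathfrak{E}_n(R,S)$.

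For (b) $\Rightarrow$ (a), the key observation is that when $R,S \in \mathcal{RE}(v_n^s)$ every row sum of $A$ lies in $\{0,\alpha,1\}$ with $\alpha = s-\lfloor s\rfloor$, and at most one row sum equals $\alpha$; the same holds for columns. If $A=\lambda A_1+(1-\lambda) A_2$ with $A_1,A_2\in\omega_n^s$, then a zero row of $A$ forces the corresponding rows of $A_1,A_2$ to vanish by nonnegativity; a row with sum $1$ forces the corresponding row sums of $A_1,A_2$ to equal $1$ by the upper bound $r_i(\cdot)\leq 1$; and the equality $\sigma(A_k)=s$ pins down the remaining $\alpha$-row of each $A_k$. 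The column analysis is parallel, so $A_1,A_2\in\omega_n(R,S)$, and extremality of $A$ in $\omega_n(R,S)$ forces $A_1=A_2=A$.

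The main effort lies in (a) $\Rightarrow$ (d). Proposition \ref{prop:increl}(i) gives $A \in \mathfrak{E}_n(R,S)$ for the row/column-sum vectors of $A$, so by Proposition \ref{PropLei1} the bipartite graph $G_A$ is a forest. I would then work with the tangent space $T_A$ consisting of matrices $E \in \mathbb{R}^{n\times n}$ satisfying $\sigma(E)=0$, $e_{ij}=0$ whenever $a_{ij}=0$, and $\sum_j e_{ij}=0$ on each saturated row (resp. $\sum_i e_{ij}=0$ on each saturated column), so that extremality of $A$ in $\omega_n^s$ is equivalent to $T_A=\{0\}$. A componentwise count on $G_A$ shows that a tree component with $r'$ unsaturated rows and $c'$ unsaturated columns contributes $\max(r'+c'-1,0)$ to $\dim T_A$ before imposing $\sigma(E)=0$. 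Every leaf of a nontrivial component must be unsaturated (otherwise its single edge equals $1$ and the component degenerates to a weight-$1$ isolated edge), so a component containing a vertex of degree $\geq 3$ has at least three unsaturated leaves and contributes $\geq 2$ dimensions, while two distinct nontrivial path components contribute $\geq 2$ dimensions together. Since the single linear relation $\sigma(E)=0$ kills at most one dimension, either scenario forces $T_A\neq\{0\}$, a contradiction. Thus $G_A$ consists of weight-$1$ isolated edges, at most one longer path whose only unsaturated vertices are its two leaves, and isolated vertices; matching $\sigma(A)=s$ then fixes the counts in (d).

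Closing with (d) $\Rightarrow$ (c) is an explicit relabeling: order the components so the weight-$1$ isolated edges come first, traverse the single longer path as $r_1\to c_1\to r_2\to c_2\to\cdots\to r_m\to c_m$ with its unsaturated leaves at the two ends, and place isolated vertices last. Propagating the saturation conditions along the path from the $\alpha$-leaf inward forces the nonzero entries to cycle through $\alpha,1-\alpha,\alpha,1-\alpha,\ldots$, yielding the block $B_m(\alpha)$ in (c). The main obstacle is the tangent-space dimension count in (a) $\Rightarrow$ (d), which must simultaneously rule out high-degree vertices, multiple nontrivial components, and unsaturated interior vertices on the single long path, using only the single equation $\sigma(E)=0$.
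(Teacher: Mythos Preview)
Your cycle (c) $\Rightarrow$ (b) $\Rightarrow$ (a) $\Rightarrow$ (d) $\Rightarrow$ (c) is a genuinely different route from the paper's. The paper runs the cycle the other way: it proves (a) $\Rightarrow$ (b) by building explicit perturbations along paths in $G_A$ (Lemmas~\ref{lmn1}--\ref{lm3} and Theorem~\ref{thm1}) to show that two non-saturated rows (or columns) already kill extremality; it then gets (b) $\Rightarrow$ (c) by invoking the Jurkat--Ryser construction of $\mathfrak{E}_n(R,S)$, and (c) $\Rightarrow$ (a) by the direct check of Corollary~\ref{Lco2}. Your (b) $\Rightarrow$ (a) argument---pinning down each row/column sum of $A_1,A_2$ from the constraints $0$, $1$, and the single residual $\alpha$ forced by $\sigma=s$---is cleaner and more conceptual than the paper's (c) $\Rightarrow$ (a), and your tangent-space count replaces the paper's sequence of ad hoc path-perturbation lemmas by a single linear-algebra computation. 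What the paper's approach buys is that it never needs the full structure theorem for tree components: once it knows $R,S\in\mathcal{RE}(v_n^s)$, it outsources the combinatorics to Jurkat--Ryser.

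There is, however, one genuine gap in your (a) $\Rightarrow$ (d). Your dimension count correctly shows that the space of perturbations \emph{before} imposing $\sigma(E)=0$ has dimension $\le 1$, and hence that $G_A$ consists of weight-$1$ edges, isolated vertices, and at most one path whose only unsaturated vertices are its two leaves. But this does not yet pin down the \emph{parity} of that path: the two leaves could both be row vertices (or both column vertices). In that case the path has an even number of edges, the unique direction $E$ alternates $+t,-t,\ldots$ along it, and $\sigma(E)=0$ identically---so the global constraint does \emph{not} kill the dimension and $T_A\neq\{0\}$. Equivalently, such a path forces $\sigma(A)$ to be an integer, so for non-integer $s$ it is excluded by the sum condition, but for integer $s$ you must invoke $T_A\neq\{0\}$ directly. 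Your closing remark lists ``high-degree vertices, multiple nontrivial components, and unsaturated interior vertices'' as the obstacles; you need to add this fourth one (leaves in the same bipartition class) before the ``matching $\sigma(A)=s$'' step, or your (d) $\Rightarrow$ (c) traversal $r_1\to c_1\to\cdots\to r_m\to c_m$ is unjustified. The fix is a one-line parity check with your own tools, but as written the argument is incomplete.
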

Before the proof of Theorem \ref{main}, we provide a few examples which illustrate the idea of the proof.
\begin{example}
Let $$A=\begin{pmatrix} 0.3 & 0 & 0.4 & 0 \\ 0.2 &  \cellcolor[gray]{0.8}0.1 &0 & \cellcolor[gray]{0.8}0.2  \\ 0 & 0 & 0.3 & 0.2 \\ 0 & \cellcolor[gray]{0.8}0.1 & 0.1 & \cellcolor[gray]{0.8}0.3 \end{pmatrix}$$
where $n=4$ and $s=2.2$.
We can perturb the entries in grey a little bit such that the resultant matrix is still in $\omega_4^{2.2}$.
Notice that as long as $0<\epsilon\leq 0.1$, the following identity always holds.
$$A=\frac{1}{2}A_1+\frac{1}{2}A_2=\frac{1}{2}\begin{pmatrix} 0.3 & 0 & 0.4 & 0 \\ 0.2 & 0.1+\epsilon &0 & 0.2-\epsilon  \\ 0 & 0 & 0.3 & 0.2 \\ 0 & 0.1-\epsilon & 0.1 & 0.3+\epsilon \end{pmatrix}+\frac{1}{2}\begin{pmatrix} 0.3 & 0 & 0.4 & 0 \\ 0.2 & 0.1-\epsilon &0 & 0.2+\epsilon  \\ 0 & 0 & 0.3 & 0.2 \\ 0 & 0.1+\epsilon & 0.1 & 0.3-\epsilon \end{pmatrix},$$
where $A_1, A_2\in\omega_4^{2.2}$. This implies that $A$ is not an extreme point of $\omega_4^{2.2}$. The entries in grey form a cycle in $G_A$ and exclude the possibility for $A$ being an extreme point of $\omega_4^{2.2}$.
\end{example}



\begin{lemma} \label{lmn1}
For $0< s \leq n$ and $A\in\omega_n^s$, if the corresponding bipartite graph $G_A$ contains a cycle, then $A\notin\mathfrak{E}(\omega_n^s)$.
\end{lemma}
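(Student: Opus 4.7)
The plan is to exhibit a non-trivial convex combination of two distinct matrices in $\omega_n^s$ that equals $A$, using the classical ``alternating perturbation along a cycle'' trick from the theory of transportation polytopes.

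First, I would set up the cycle. Since $G_A$ is bipartite, any cycle has even length $2\ell$ with $\ell \geq 2$, and alternates between row vertices and column vertices. Label the vertices of the cycle as $i_1, j_1, i_2, j_2, \ldots, i_\ell, j_\ell$ (rows and columns strictly alternating), so the cycle's edges, read as matrix positions, are
\[
(i_1,j_1),\ (i_2,j_1),\ (i_2,j_2),\ (i_3,j_2),\ \ldots,\ (i_\ell,j_\ell),\ (i_1,j_\ell).
\]
By definition of $G_A$, each of the corresponding entries of $A$ is strictly positive.

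Next, I would construct a signed ``circulation'' matrix $E = [e_{i,j}]$ which is zero off the cycle and alternates $\pm 1$ on the cycle: put $+1$ at the positions $(i_k, j_k)$ and $-1$ at the positions $(i_{k+1}, j_k)$ for $k = 1, \ldots, \ell$ (indices mod $\ell$). The key combinatorial observation is that every row of $E$ is either the zero vector or contains exactly one $+1$ and one $-1$ (coming from the two cycle edges incident to the corresponding row vertex), and similarly for columns. Hence every row sum and every column sum of $E$ equals $0$, and in particular $\sigma(E) = 0$.

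Then I would choose $\epsilon > 0$ strictly smaller than the minimum of the (finitely many, strictly positive) entries of $A$ lying on the cycle, and set $A_1 = A + \epsilon E$, $A_2 = A - \epsilon E$. By the choice of $\epsilon$, both $A_1$ and $A_2$ have non-negative entries; their row and column sums coincide with those of $A$, so $A_1, A_2 \in \omega_n$ and $\sigma(A_1) = \sigma(A_2) = \sigma(A) = s$, giving $A_1, A_2 \in \omega_n^s$. Since $E \ne 0$, we have $A_1 \ne A_2$, yet
\[
A = \tfrac{1}{2} A_1 + \tfrac{1}{2} A_2,
\]
so $A$ is not an extreme point of $\omega_n^s$.

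There is no real obstacle here: the only care needed is the bookkeeping of the $\pm 1$ pattern on the cycle so that each cycle row and column gets exactly one $+1$ and one $-1$, which is automatic from the bipartite alternation. Everything else (non-negativity, membership in $\omega_n^s$) is immediate from the choice of $\epsilon$ and from $\sigma(E)=0$.
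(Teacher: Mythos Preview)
Your proposal is correct and follows essentially the same approach as the paper: both construct the alternating $\pm\epsilon$ perturbation along the cycle and observe that it preserves all row and column sums (hence membership in $\omega_n^s$), yielding $A=\tfrac{1}{2}(A_1+A_2)$ with $A_1\neq A_2$. Your packaging via the explicit circulation matrix $E$ is a clean formalization of exactly what the paper does entry-by-entry.
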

\begin{proof}
Let $H$ be a cycle contained in $G_A$ with edges $(i_1,j_1),(i_2,j_1),(i_2,j_2),$ $(i_3,j_2),$$\ldots,$
\\ $(i_k,j_k),(i_1,j_k)$, which means that the $(i_1,j_1),(i_2,j_1),(i_2,j_2),(i_3,j_2),$ $\ldots,(i_k,j_k),$$(i_1,j_k)$ entries of $A$ are positive. Notice that all of these entries are strictly less than one. This is because in each row of $i_1,\ldots,i_k$ and each column of $j_1,\ldots,j_k$, there are at least two positive entries. For a sufficiently small $\epsilon>0$, let $A_1$ be the matrix obtained from $A$ by adding $\epsilon$ to the entries $(i_1,j_1),\cdots,(i_k,j_k)$ and subtracting $\epsilon$ from the entries $(i_2, j_1),\cdots,(i_1,j_k)$. At the same time, let $A_2$ be the matrix obtained from $A$ by subtracting $\epsilon$ from the entries $(i_1,j_1),\cdots,(i_k,j_k)$ and adding $\epsilon$ to the entries $(i_2, j_1),\cdots,(i_1,j_k)$. Clearly, $A_1, A_2\in \omega_{n,k}$ for $\epsilon$ small enough. Actually we can see that both $A_1$ and $A_2$ have the same row and column sum vectors as $A$ does. Since $A=\frac{1}{2}A_1+\frac{1}{2}A_2$, we know that $A$ is not an extreme point.
\end{proof}

\begin{corollary}
Let $0\leq s\leq n.$ If $A\in \mathfrak{E}(\omega_n^s),$ then $G_A$ is a forest.
\end{corollary}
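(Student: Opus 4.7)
The plan is to obtain the corollary as an immediate contrapositive of Lemma~\ref{lmn1}. Suppose, for contradiction, that $A\in\mathfrak{E}(\omega_n^s)$ but the bipartite graph $G_A$ contains a cycle. Then Lemma~\ref{lmn1} produces, for any sufficiently small $\epsilon>0$, two distinct matrices $A_1,A_2\in\omega_n^s$ with $A=\tfrac{1}{2}A_1+\tfrac{1}{2}A_2$, contradicting the fact that $A$ is an extreme point of $\omega_n^s$. Hence $G_A$ has no cycle, i.e., $G_A$ is a forest.

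I would also briefly dispose of the boundary case $s=0$, which is outside the range $0<s\leq n$ handled by Lemma~\ref{lmn1}. When $s=0$ the only matrix in $\omega_n^0$ is the $n\times n$ zero matrix, so $G_A$ has no edges at all and is trivially a forest (on $2n$ isolated vertices). For every $s>0$ the hypothesis of Lemma~\ref{lmn1} is satisfied verbatim, and the contrapositive argument above goes through.

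No substantive obstacle is anticipated: the combinatorial work (the explicit $\pm\epsilon$ perturbation around a cycle) has already been carried out in Lemma~\ref{lmn1}, and the corollary merely repackages that lemma in the shape most convenient for the subsequent characterization of extreme points in Theorem~\ref{main}, where one needs to know that each connected component of $G_A$ is acyclic before describing it as an isolated vertex or a path.
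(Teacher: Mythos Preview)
Your proposal is correct and matches the paper's approach exactly: the corollary is stated immediately after Lemma~\ref{lmn1} with no separate proof, being simply its contrapositive. Your explicit handling of the boundary case $s=0$ (which lies outside the hypothesis $0<s\le n$ of Lemma~\ref{lmn1}) is a small but welcome addition that the paper omits.
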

However, the fact that $G_A$ has no cycle does not guarantee that $A\in\mathfrak{E}(w_n^s)$.
\begin{example}\label{ex1}
Let $$A=\begin{pmatrix} 0.2 &  0.4 \\ 0.5 & 0 \end{pmatrix} \in \omega_{2}^{1.1}.$$
Clearly, there is no cycle in $G_A$. However, $A$ is not an extreme point since $$A=\frac{1}{2}\begin{pmatrix} 0.2+\epsilon &  0.4-\epsilon \\ 0.5 & 0 \end{pmatrix}+\frac{1}{2}\begin{pmatrix} 0.2-\epsilon &  0.4+\epsilon \\ 0.5 & 0 \end{pmatrix}$$ for $0<\epsilon\leq 0.2$.
\end{example}
Let $A\in \omega_n$ with an entry $a_{i,j}>0$, we call $a_{i,j}$ {\it row perturbable} if the $i$th row sum $r_i<1$. Similarly, $a_{i,j}$ is called {\it column perturbable} if the sum of the $j$th column $s_j<1$. $a_{i,j}$ is called {\it perturbable} if it is both row and column perturbable. An edge of $G_A$ is called row perturbable, column perturbable or perturbable if the corresponding entry in $A$ row perturbable, column perturbable or perturbable respectively. A path is called starting with {\it row (column) direction} if the entries corresponding to the first two edges are in the same row (column). A path is called ended in {\it row (column) direction}  if the entries corresponding to the last two edges are in the same row (column).


From Example \ref{ex1}, one can see that if a matrix containing two non-zero entries in the same row are column perturbable, then it is not an extreme point.

\begin{example} \label{ex2}
Let
$$A=\begin{pmatrix} \cellcolor[gray]{0.8} 0.2 & 0 .4  & 0 &   \cellcolor[gray]{0.8}0.4 \\ 0.4 & 0 & 0 & 0.5 \\ 0 & 0.3 & 0.5 & 0 \\ 0.3 &  \cellcolor[gray]{0.8}0.1 & 0.5 &  \cellcolor[gray]{0.8}0.1 \end{pmatrix} \in \omega_{4}^{3.7}.$$

$A$ is not an extreme point either because $$A=\frac{1}{2}\begin{pmatrix} 0.2+\epsilon & 0 .4  & 0 &   0.4-\epsilon \\ 0.4 & 0 & 0 & 0.5 \\ 0 & 0.3 & 0.5 & 0 \\ 0.3 &  0.1-\epsilon & 0.5 &  0.1+\epsilon \end{pmatrix} +\frac{1}{2}\begin{pmatrix} 0.2-\epsilon & 0.4  & 0 &   0.4+\epsilon \\ 0.4 & 0 & 0 & 0.5 \\ 0 & 0.3 & 0.5 & 0 \\ 0.3 &  0.1+\epsilon & 0.5 &  0.1-\epsilon \end{pmatrix}$$ as long as $0<\epsilon\leq 0.1$.
Note that the grey entries in $A$ are associated with the following path in the bipartite graph $G_A.$

\bigskip

{ {\begin{figure}[H]
\centering \begin{tikzpicture}
\draw (4,-0.5)--(0,0)--(4,-3.5)--(0,-3)--(4,-1.5);

\put (-14,-3) {$i_1$};
\put (-2,-4) {$\bullet$}

\put (-14,-30) {$i_2$};
\put (-2,-31) {$\bullet$}

\put (-14,-60) {$i_3$};
\put (-2,-59) {$\bullet$}

\put (-14,-90) {$i_4$};
\put (-2,-88) {$\bullet$}

\put (119,-15) {$j_1$};
\put (119,-45) {$j_2$};
\put (119,-72) {$j_3$};
\put (119,-102) {$j_4$};

\put (110,-17) {$\bullet$}
\put (110,-46) {$\bullet$}
\put (110,-74) {$\bullet$}
\put (110,-102) {$\bullet$}
\end{tikzpicture}\end{figure}}}

The path is both starting and ending with the row direction. Besides, both the first edge $(i_1,j_1)$ and the last edge $(i_4,j_2)$ are column perturbable.
\end{example}

Indeed, for $A\in \omega_n^s$, if there exists a path in the bipartite graph $G_A$
$$v_{j_1}\rightarrow v_{i_1}\rightarrow v_{j_2}\rightarrow \cdots \rightarrow v_{j_k}\rightarrow v_{i_k} \rightarrow v_{j_{k+1}}$$ containing even number of edges, then we can perturb the edges by adding a small positive number $\epsilon$ to the entry corresponding to the first edge and subtract an $\epsilon$ from the entry corresponding to the second edge and so on to get a matrix $A_1$. We illustrate this process by the following graph
\begin{equation*}
v_{j_1}{\longrightarrow^{\!\!\!\!\!\!\!\!\!\!+\epsilon}}\ v_{i_1} {\longrightarrow^{\!\!\!\!\!\!\!\!\!\!-\epsilon}}\  v_{j_2}{\longrightarrow^{\!\!\!\!\!\!\!\!\!\!+\epsilon}}\  \cdots {\longrightarrow^{\!\!\!\!\!\!\!\!\!\!-\epsilon}}\  v_{j_k}{\longrightarrow^{\!\!\!\!\!\!\!\!\!\!+\epsilon}}\  v_{i_k} {\longrightarrow^{\!\!\!\!\!\!\!\!\!\!-\epsilon}}\  v_{j_{k+1}}.
\end{equation*}
Similarly we can do the opposite operation to get another matrix $A_2$, which can be illustrated by the following graph
\begin{equation*}
v_{j_1}{\longrightarrow^{\!\!\!\!\!\!\!\!\!\!-\epsilon}}\ v_{i_1} {\longrightarrow^{\!\!\!\!\!\!\!\!\!\!+\epsilon}}\  v_{j_2}{\longrightarrow^{\!\!\!\!\!\!\!\!\!\!-\epsilon}}\  \cdots {\longrightarrow^{\!\!\!\!\!\!\!\!\!\!+\epsilon}}\  v_{j_k}{\longrightarrow^{\!\!\!\!\!\!\!\!\!\!-\epsilon}}\  v_{i_k} {\longrightarrow^{\!\!\!\!\!\!\!\!\!\!+\epsilon}}\  v_{j_{k+1}}.
\end{equation*}
Then we have $\sigma(A_1)=\sigma(A_2)=\sigma(A)=s$ and $A=\frac{1}{2}(A_1+A_2)$. The only question is whether $A_1$ and $A_2$ are still doubly substochastic matrices. Notice that when we add an $\epsilon$ to the first edge or to the last edge, the corresponding column sums might be greater than one. To make sure that $A_1$ and $A_2$ are still in $\omega_n$, $b_{i_1, j_1}$ and $b_{i_{k}, j_{k+1}}$ must be column perturbable. In Example \ref{ex2}, both $(i_1,j_1)$ entry and $(i_4,j_2)$ entry are column perturbable, making the construction of $A_1$ and $A_2$ valid.

\begin{proposition} \label{prop1}
Let $\mathcal{P}$ be a path in the bipartite graph $G_A.$ Then $\mathcal{P}$ contains even number of edges if
and only if the starting direction and the ending direction of $\mathcal{P}$ are the same.
\end{proposition}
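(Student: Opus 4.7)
My plan is to give a short parity argument relying only on the bipartiteness of $G_A$. First I would write the path as $\mathcal{P}: v_1, v_2, \ldots, v_\ell$ with $\ell - 1$ edges; the definitions of starting and ending direction make sense only when $\ell \geq 3$, which I assume. The key observation is that since $G_A$ is bipartite with parts $I$ (row indices) and $J$ (column indices) and consecutive vertices of any walk alternate between the two parts, $v_k$ and $v_{k+2}$ always lie in the same part of the bipartition.

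Next I would translate the two direction conditions into statements about which part certain vertices of $\mathcal{P}$ belong to. The first two edges $(v_1,v_2)$ and $(v_2,v_3)$ share the vertex $v_2$, and the two corresponding entries of $A$ lie in the same row (respectively the same column) precisely when $v_2\in I$ (respectively $v_2\in J$). So $\mathcal{P}$ starts in the row direction iff $v_2\in I$, and symmetrically $\mathcal{P}$ ends in the row direction iff $v_{\ell-1}\in I$. Therefore the starting and ending directions coincide iff $v_2$ and $v_{\ell-1}$ lie in the same part of the bipartition.

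Finally, the alternating-parts observation gives that $v_2$ and $v_{\ell-1}$ lie in the same part iff $(\ell-1)-2=\ell-3$ is even, equivalently iff $\ell-1$, the number of edges of $\mathcal{P}$, is even. This is exactly the claimed equivalence. There is no real obstacle; the only point worth checking is the degenerate case $\ell=3$, where the first two edges and the last two edges overlap, but then $v_2=v_{\ell-1}$, so the starting and ending directions automatically agree, while the number of edges is $2$, which is even, so the equivalence continues to hold.
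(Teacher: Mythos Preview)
Your argument is correct and is essentially the same parity argument as the paper's: the paper notes that the direction determined by consecutive edge pairs $(e_i,e_{i+1})$ alternates with $i$, which is precisely your observation that the shared vertex $v_{i+1}$ alternates between the two parts of the bipartition. You have simply made the role of the bipartition explicit and spelled out the degenerate case $\ell=3$, but the underlying idea is identical.
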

\begin{proof} Denote the edges on $\mathcal{P}$ by ${e_1},{e_2},\ldots,{e_k}.$
Both directions follow that if the entries corresponding to $e_i$ and $e_{i+1}$ are in the same row (column) and then the entries corresponding to $e_{i+1}$ and $e_{i+2}$ are in the same column (row) for $i=1,2, \ldots, k-2.$
\end{proof}

\begin{lemma} \label{lm2}
Let $A\in \omega_n^s.$ $A$ is not an extreme point of $\omega_n^s$, if $G_A$ contains a path $\mathcal{P}$ with even number of edges satisfying one of the following:
\begin{enumerate}[(i)]
\item $\mathcal{P}$ starts and ends in row direction and the entries corresponding to the first edge and the last edge are column perturbable; or
\item $\mathcal{P}$ starts and ends in column direction and the entries corresponding to the first edge and the last edge are row perturbable.
\end{enumerate}
\end{lemma}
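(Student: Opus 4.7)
The plan is to prove case (i) by constructing an explicit perturbation of $A$ along $\mathcal{P}$ that preserves all row sums and the total sum while altering exactly two column sums in a controlled way; case (ii) will then follow by transposing everything.

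First I would parametrize the path. Since $\mathcal{P}$ has even length $2k$ and both starts and ends in row direction, the alternating structure of bipartite paths forces its edges to be, after relabeling,
\begin{equation*}
(i_1,j_1),\ (i_1,j_2),\ (i_2,j_2),\ (i_2,j_3),\ \ldots,\ (i_k,j_k),\ (i_k,j_{k+1}),
\end{equation*}
with the $i_l$ pairwise distinct and the $j_l$ pairwise distinct. For a sufficiently small $\epsilon>0$, I would then define $A_1$ by adding $\epsilon$ to each entry $(i_l,j_l)$ and subtracting $\epsilon$ from each entry $(i_l,j_{l+1})$ for $l=1,\ldots,k$, and $A_2$ by the opposite perturbation, and aim to show $A=\frac{1}{2}A_1+\frac{1}{2}A_2$ with $A_1,A_2\in\omega_n^s$ and $A_1\neq A\neq A_2$.

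The key step is the bookkeeping that shows $A_1,A_2\in\omega_n^s$. Every row $i_l$ picks up exactly one $+\epsilon$ and one $-\epsilon$, so all row sums are preserved; every interior column $j_l$ with $2\le l\le k$ likewise picks up a $+\epsilon$ (at $(i_l,j_l)$) and a $-\epsilon$ (at $(i_{l-1},j_l)$), and these cancel. Only the extreme columns $j_1$ and $j_{k+1}$ receive a single $\pm\epsilon$. This is exactly where the column-perturbability hypothesis on the first edge $(i_1,j_1)$ and the last edge $(i_k,j_{k+1})$ enters: it guarantees enough slack in these two column sums to absorb the $\pm\epsilon$ while remaining $\le 1$. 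Entrywise nonnegativity only requires $\epsilon$ smaller than every positive entry of $A$ on $\mathcal{P}$, and $\sigma(A_1)=\sigma(A_2)=\sigma(A)=s$ holds by design since the $+\epsilon$'s and $-\epsilon$'s are equinumerous.

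Case (ii) is obtained by applying case (i) to $A^T$: transposition swaps rows with columns, sends a column-direction path to a row-direction path and row perturbability to column perturbability, preserves $\omega_n^s$, and preserves extremality. The main obstacle is the very first step, namely nailing down the precise alternating pattern of the edges of $\mathcal{P}$ so that the $\pm\epsilon$ pattern truly cancels at every interior row and column and leaves residual changes only at the two endpoints; once that pattern is written out explicitly, the rest is a direct verification.
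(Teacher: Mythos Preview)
Your proposal is correct and follows essentially the same approach as the paper: both argue case (i) by alternating $\pm\epsilon$ along the edges of $\mathcal{P}$ to produce $A_1,A_2\in\omega_n^s$ with $A=\tfrac12(A_1+A_2)$, using the even length to preserve $\sigma$ and the column-perturbability of the first and last edges to keep the two non-cancelling column sums below~$1$, then handle case (ii) by transposition. Your explicit edge labeling $(i_1,j_1),(i_1,j_2),\ldots,(i_k,j_{k+1})$ and the row/column bookkeeping make the cancellation pattern more transparent than the paper's sketch, but the underlying argument is the same.
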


\begin{proof} (i). According to Proposition \ref{prop1},  $\mathcal{P}$ contains even number of edges. Here is a method to construct two matrices $A_1$ and $A_2$ such that $A=\frac{1}{2}(A_1+A_2)$. To construct $A_1$, one may first add a positive number $\epsilon$ to the entry corresponding to the first edge. Then subtract an $\epsilon$ from the entry corresponding to the second edge. Then again add an $\epsilon$ to the entry corresponding to the third edge and subtract an $\epsilon$ from the entry corresponding to the forth edge. Keep adding and subtracting an $\epsilon$ alternately until we finish subtracting from the last entry corresponding to the last edge. Thus we obtain the matrix $A_1$. Since $\mathcal{P}$ contains even number of edges, $\sigma(A_1)=\sigma(A).$ To construct $A_2,$ simply switch additions and subtractions in the constriction of $A_1$. Clearly $\sigma(A_2)=\sigma(A)$ and $A=\frac{1}{2}(A_1+A_2).$ Since both entries corresponding to the first edge and the last edge are column perturbable and the path $\mathcal{P}$ starts and ends in row direction, both $A_1$ and $A_2$ are doubly substochastic matrices for $\epsilon$ small enough.

(ii). By taking the transpose of $A$, it is converted to Case (i).
\end{proof}

\begin{lemma} \label{lm3}
Let $A\in \omega_n^s.$ $A$ is not an extreme point of $\omega_n^s$ if $G_A$ contains a path $\mathcal{P}$ satisfying
\begin{enumerate}[(i)]
\item both the first edge and the last edge are column perturbable; or
\item both the first edge and the last edge are row perturbable.
\end{enumerate}
\end{lemma}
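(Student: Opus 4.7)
The plan is to reduce (i) to Lemma \ref{lm2}(i) by truncating $\mathcal{P}$ at its endpoints when necessary; (ii) then follows from (i) by passing to the transpose, exactly as in Lemma \ref{lm2}(ii). Write the path $\mathcal{P}$ as $v_{x_0}\to v_{x_1}\to\cdots\to v_{x_k}$ with edges $e_1,\ldots,e_k$, where $e_1$ and $e_k$ are column perturbable.

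The key observation is that column perturbability transfers across a column-direction junction. If $\mathcal{P}$ starts in column direction, then $e_1$ and $e_2$ share a common column vertex $v_{x_1}$, so both entries lie in column $v_{x_1}$; the hypothesis ``$e_1$ column perturbable'' (namely $s_{v_{x_1}}(A)<1$) is precisely the statement that $e_2$ is column perturbable. I would therefore delete $e_1$ and work with the sub-path $e_2,\ldots,e_k$, whose new first edge $e_2$ is still column perturbable and whose starting direction, being the alternation of the old one, is now row direction. A symmetric argument handles the right end: if $\mathcal{P}$ ends in column direction, then $e_{k-1}$ inherits column perturbability from $e_k$, and dropping $e_k$ produces a sub-path that ends in row direction with column-perturbable last edge.

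After at most two such truncations, the resulting sub-path $\mathcal{P}'$ starts and ends in row direction with both its first and last edges column perturbable; by Proposition \ref{prop1}, $|\mathcal{P}'|$ is then automatically even. Provided $|\mathcal{P}'|\ge 2$, Lemma \ref{lm2}(i) applies to $\mathcal{P}'$ and produces the required nontrivial convex decomposition $A=\tfrac{1}{2}(A_1+A_2)$ in $\omega_n^s$, showing that $A\notin\mathfrak{E}(\omega_n^s)$.

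The main obstacle is the length bookkeeping at the end of this reduction. The unique configuration in which both truncations strip the path down to fewer than two edges is $|\mathcal{P}|=2$ with $\mathcal{P}$ in column direction, i.e., two positive entries $e_1,e_2$ sitting in a single column $v_{x_1}$ with $s_{v_{x_1}}(A)<1$. For this boundary case I would argue directly: if both rows hosting $e_1,e_2$ have strict slack, then the simple alternation $+\epsilon$ on $e_1$ and $-\epsilon$ on $e_2$ is valid in both directions $A\pm\epsilon E$ and already delivers the decomposition. Otherwise, since $A_{e_i}\le s_{v_{x_1}}(A)<1$, any saturated row hosting $e_1$ or $e_2$ must contain a further positive entry, which may be prepended or appended to $\mathcal{P}$ to produce a strictly longer path; iterating this extension (possibly passing through full columns and saturated rows alternately) either eventually yields a path that satisfies the hypothesis of Lemma \ref{lm3}(i) non-degenerately, so that the main reduction above closes the argument, or else revisits a vertex and exhibits a cycle in $G_A$, in which case Lemma \ref{lmn1} already yields $A\notin\mathfrak{E}(\omega_n^s)$.
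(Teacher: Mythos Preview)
Your main reduction---drop the first (respectively last) edge when $\mathcal{P}$ starts (respectively ends) in column direction, inherit column perturbability across the shared column, and invoke Lemma~\ref{lm2}(i)---is exactly the paper's argument; the paper is in fact terser than you are on this point and does not separately discuss the degenerate length-$2$ case at all.

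Your handling of that degenerate case has a gap, however. The extension you describe alternates through saturated rows and full columns, and you assert that it must terminate either at a cycle or at a longer path still satisfying hypothesis~(i). But the iteration can also halt at an \emph{unsaturated} row vertex $i'$ reached through a \emph{full} column $j'$ (with $s_{j'}(A)=1$): the new endpoint edge $(i',j')$ is then row perturbable but not column perturbable, and the extended path no longer satisfies~(i). The repair is not difficult---if the extension at one end stops at a column-perturbable edge you are done via~(i), while if both ends stop at row-perturbable edges then the full extended path satisfies~(ii) with length at least $4$, so the transposed reduction applies non-degenerately---but your text asserts that the first alternative always occurs, which is not the case.
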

\begin{proof} (i). If $\mathcal{P}$ starts and ends in row direction, then Lemma \ref{lm3} holds. Suppose $\mathcal{P}$ starts in column direction.  Note that $\mathcal{P}$ contains odd number of edges and the entry in $A$ corresponding to the second edge of $\mathcal{P}$ is in the same column as the entry corresponding to the first edge, so it is column perturbable too. Let $\mathcal{P'}$ be the path by removing the first edge from $\mathcal{P}.$ Then $\mathcal{P'}$ starts and ends in row direction and the entries corresponding to the first edge and the last edge are column perturbable. Thus $A$ is not an extreme point. If $\mathcal{P}$ ends in column direction, then construct $\mathcal{P'}$ by removing the last edge from $\mathcal{P}.$

(ii). By taking the transpose of $A$, it is converted to Case (i).
\end{proof}

\begin{proposition} \label{prop1}
Let $A\in \omega_n^s.$
\begin{enumerate}[(i)]
\item If there exists a path $\mathcal{P}$ in which the first edge and the last edge are column perturbable, then there exists a path $\mathcal{P'}$ which starts and ends in row direction and in which the first edge and the last edge are column perturbable.
\item  If there exists a path $\mathcal{P}$ in which the first edge and the last edge are row perturbable, then there exists a path $\mathcal{P'}$ which starts and ends in column direction and in which the first edge and the last edge are row perturbable.
\item If there exists a path $\mathcal{P}$ in which the first edge is column perturbable and the last edge is row perturbable, then there exists a path $\mathcal{P'}$ which starts in row direction and ends in column direction and in which the first edge is column perturbable and the last edge is row perturbable.
\item If there exists a path $\mathcal{P}$ in which the first edge is row perturbable and the last edge is column perturbable, then there exists a path $\mathcal{P'}$ which starts in column direction and ends in row direction and in which the first edge is row perturbable and the last edge is column perturbable.
\end{enumerate}

\end{proposition}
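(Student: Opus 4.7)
The central device is a single \emph{stripping} observation about paths in $G_A$. My plan is to isolate this observation, deduce part (i) from it, and then obtain (ii)--(iv) by transposition of $A$ and by applying the same observation with the roles of ``row'' and ``column'' interchanged.

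The observation is the following. Suppose the first edge $e_1$ of a path $\mathcal{P}$ is column perturbable and $\mathcal{P}$ starts in the column direction. Then $e_1$ and $e_2$ lie in the same column $j$; since $e_1$ is column perturbable we have $s_j(A)<1$, so $e_2$ is automatically column perturbable as well. Deleting $e_1$ yields a shorter subpath whose first edge $e_2$ is still column perturbable, and because directions alternate along any path in the bipartite graph $G_A$, this subpath starts in the row direction. A symmetric statement holds at the last edge of $\mathcal{P}$, and completely analogous statements (with ``row'' and ``column'' swapped) cover the row-perturbable case.

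Given this, part (i) follows by taking the given $\mathcal{P}$ and applying the stripping observation at the left end if $\mathcal{P}$ starts in column direction, and at the right end if it ends in column direction; the resulting $\mathcal{P}'$ then starts and ends in row direction while both its outer edges remain column perturbable. Part (ii) is just (i) applied to $A^{T}$, since transposing swaps rows with columns and leaves the combinatorics of $G_A$ intact, converting row perturbability into column perturbability. For (iii) and (iv) I would apply the stripping observation independently at the two ends, using the column-perturbable version at the end that needs column perturbability and the row-perturbable version at the end that needs row perturbability, thereby producing the prescribed mixed starting and ending directions.

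The main nuisance is the book-keeping: because the two ends are adjusted independently and there are four parts, one must verify that stripping at one end never invalidates either the hypothesis or the direction imposed at the other end. This is easy because each stripping step modifies only a single boundary edge, so the left-end and right-end operations commute. A minor technical point, which I would dispose of with a short separate remark, is to ensure $\mathcal{P}'$ retains at least two edges so that ``starts/ends in a given direction'' is meaningful; this is automatic whenever $\mathcal{P}$ is long enough, and the very short cases can be read off directly from the hypothesis by taking $\mathcal{P}'=\mathcal{P}$.
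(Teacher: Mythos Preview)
Your approach is essentially the same as the paper's: both proofs rest on the observation that if the first edge is column perturbable and the path starts in column direction, then the second edge lies in the same column and is therefore also column perturbable, so one may strip off the first edge (and symmetrically at the other end, and with row/column swapped for (ii)--(iv)). The paper removes the first and last edges together in case (i) after asserting the path has even length, whereas you treat the two ends independently; but the core stripping device and the reduction of (ii), (iv) via transposition are identical.
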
 \label{propn1}
\begin{proof} (i). Let $\mathcal{P}$ be the path in which the first edge and the last edge are column perturbable. Then $\mathcal{P}$ contains even number of edges and hence $\mathcal{P}$ starts and ends in the same direction. If $\mathcal{P}$ starts and ends in row direction, then we are done. Suppose that $\mathcal{P}$ starts in column direction, which means that the entry corresponding to the second edge is in the same column as the entry corresponding to the first edge. Hence it is column perturbable. Similarly the entry corresponding to the last second edge is column perturbable as well. Then one can construct $\mathcal{P'}$ by removing the first and the last edge.

(ii). By taking the transpose of $A$, it is converted to Case (i).

(iii). By the same argument in (i), one may remove the first edge and the last edge if the given path does not start and end in desired directions.

(iv). By taking the transpose of $A$, it is converted to Case (iii).
\end{proof}

\begin{example}
Let $A=\begin{pmatrix}0 & 0.6 & 0.4 \\ 0.6 & 0.4 & 0 \\ 0.4 & 0 & 0\end{pmatrix}\in \omega_3^{2.4}.$ Although $G_A$ has a path in which the first edge, connecting $j_3$ and $i_1,$ is column perturbable and the last edge, connecting $j_1$ and $i_3,$ is row perturbable, it is an extreme point of $\omega_3^{2.4}.$

{ {\begin{figure}[H]
\centering \begin{tikzpicture}
\draw (0,-2)--(4,-0.5)--(0,-1)--(4,-1.5)--(0,0)--(4,-2.5);

\put (-14,-3) {$i_1$};
\put (-2,-4) {$\bullet$}

\put (-14,-30) {$i_2$};
\put (-2,-31) {$\bullet$}

\put (-14,-60) {$i_3$};
\put (-2,-59) {$\bullet$}

\put (119.5,-15) {$j_1$};
\put (119.5,-45) {$j_2$};
\put (119.5,-72) {$j_3$};

\put (110,-18) {$\bullet$}
\put (110,-45.2) {$\bullet$}
\put (110,-74) {$\bullet$}
\end{tikzpicture}\end{figure}}}

\bigskip
\end{example}

\begin{theorem} \label{thm1}
Let $n$ be a positive integer and $0\leq s\leq n.$ Let $A\in \omega_n^s.$ If $A$ has two rows or two columns whose sum are strictly greater than $0$ and strictly less than $1$, then $A$ is not an extreme point of $\omega_n^s.$
\end{theorem}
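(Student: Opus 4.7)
The plan is to exhibit, under the hypothesis, a nonzero perturbation $D$ supported on positive entries of $A$ with $\sigma(D)=0$, such that $A\pm\epsilon D\in\omega_n^s$ for all sufficiently small $\epsilon>0$; then $A=\tfrac12(A+\epsilon D)+\tfrac12(A-\epsilon D)$ is a nontrivial convex decomposition, so $A\notin\mathfrak{E}(\omega_n^s)$. After taking the transpose if necessary, I may assume there are two rows $i_1\neq i_2$ with $r_{i_1},r_{i_2}\in(0,1)$, so every positive entry in either row is row perturbable.

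\emph{Case 1.} $v_{i_1}$ and $v_{i_2}$ lie in the same connected component of $G_A$. A path $\mathcal{P}$ joining them has its first edge in row $i_1$ and its last edge in row $i_2$, both row perturbable, and Lemma \ref{lm3}(ii) yields $A\notin\mathfrak{E}(\omega_n^s)$.

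\emph{Case 2.} $v_{i_1}, v_{i_2}$ lie in distinct components $C_1,C_2$ with row-vertex sets $R_1,R_2$ and column-vertex sets $T_1,T_2$. If some $i_3\in R_1\setminus\{i_1\}$ satisfies $r_{i_3}<1$, then $r_{i_3}>0$ (as $v_{i_3}$ is non-isolated), so $r_{i_3}\in(0,1)$, and the pair $(i_1,i_3)$ reduces to Case 1; analogously for $R_2$. Otherwise every row in $R_1\setminus\{i_1\}$ and every row in $R_2\setminus\{i_2\}$ has sum $1$, and the identity $\sum_{i\in R_1}r_i=\sum_{j\in T_1}s_j$ (edges from $R_1$ stay inside $C_1$) gives
\begin{equation*}
\sum_{j\in T_1}(1-s_j)=(|T_1|-|R_1|)+(1-r_{i_1}).
\end{equation*}
The left side is non-negative, $|T_1|-|R_1|$ is an integer, and $1-r_{i_1}\in(0,1)$, so $|T_1|\geq|R_1|$ and therefore $\sum_{j\in T_1}(1-s_j)\geq 1-r_{i_1}>0$. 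Thus some $j_3\in T_1$ satisfies $s_{j_3}<1$, and symmetrically some $j_4\in T_2$ satisfies $s_{j_4}<1$.

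To finish, choose paths $\mathcal{P}_1\subset C_1$ from $v_{i_1}$ to $v_{j_3}$ and $\mathcal{P}_2\subset C_2$ from $v_{i_2}$ to $v_{j_4}$; each has odd length since it joins a row vertex to a column vertex in the bipartite graph. Define $D$ by assigning the edges of $\mathcal{P}_1$ the alternating signs $+1,-1,+1,\ldots,+1$ (beginning with $+1$) and the edges of $\mathcal{P}_2$ the alternating signs $-1,+1,-1,\ldots,-1$ (beginning with $-1$), with $D_{i,j}=0$ elsewhere. A telescoping count shows $\sigma(D)=0$; each internal vertex on a path is incident to two edges of opposite signs, contributing $0$ to the corresponding row or column sum of $D$, leaving only $r_{i_1}(D)=s_{j_3}(D)=+1$ and $r_{i_2}(D)=s_{j_4}(D)=-1$. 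Because $r_{i_1},r_{i_2},s_{j_3},s_{j_4}\in(0,1)$ and every entry of $A$ on which $D$ is nonzero is strictly positive, both $A+\epsilon D$ and $A-\epsilon D$ lie in $\omega_n^s$ for all sufficiently small $\epsilon>0$. The main obstacle is precisely Case 2: the naive $4$-cycle perturbation fails because $a_{i_1,j_2}=a_{i_2,j_1}=0$ when the components differ, and the parity argument is what supplies the column slack needed to couple the two odd-length paths with opposite overall signs so that $\sigma$ is preserved.
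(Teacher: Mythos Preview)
Your proof is correct and follows the paper's overall strategy: Case 1 is handled identically via Lemma~\ref{lm3}, and Case 2 is resolved by the same two-odd-path perturbation (adding $\epsilon$ alternately along a path in one component and subtracting alternately along a path in the other). The one genuine difference is how the needed column slack in each component is located. The paper first disposes of cycles via Lemma~\ref{lmn1}, then argues that in the remaining tree each leaf has its incident edge either row- or column-perturbable, and uses such a leaf as the far endpoint of $\mathcal{P}_s$. Your counting identity
\[
\sum_{j\in T_1}(1-s_j)=(|T_1|-|R_1|)+(1-r_{i_1})
\]
bypasses both the cycle case and the leaf analysis: it shows directly that some column in the component has sum strictly below $1$, regardless of whether $G_A$ is a forest. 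This is a tidy simplification, since a simple path from $v_{i_1}$ to $v_{j_3}$ exists in any connected component, and the bipartite parity automatically makes it odd. The price is only that your endpoint $v_{j_3}$ need not be a leaf, but the perturbation argument does not require that.
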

\begin{proof}
Without loss of generality, we can assume that the $s$th row and $t$th row sums are strictly between $0$ and $1.$ Otherwise, one may take the transpose of $A$.

If there is a path in $G_A$ connecting vertices $i_s$ and $i_t$, then by Lemma~\ref{lm3}, Theorem~\ref{thm1} holds.

Suppose the vertices $i_s$ and $i_t$ are in the different connected components $G_s$ and $G_t$ respectively. If either $G_s$ or $G_t$ contains a cycle, then $A$ is not an extreme point due to Lemma \ref{lmn1}. Suppose neither $G_s$ nor $G_t$ contains a cycle. Since $G_s$ does not contain a cycle and is connected, there exists a vertex $v$ with degree $1$ which implies that the edge connected to $v$ is either row perturbable or column perturbable. Since $i_s$ is row perturbable, if $v$ is row perturbable as well, then $A$ is not an extreme point due to Lemma \ref{lm3}. Suppose the edge connected to $v$ is column perturbable, then there exists a path $\mathcal{P}_s$ connecting $i_s$ and $v$ in which the first edge is row perturbable and the last edge is column perturbable. Also it starts in column direction and ends in row direction. Similarly, there exists a path $\mathcal{P}_t$ in $G_t$ connecting $i_t$ and a vertex $w$ which has degree $1$ and the only edge connecting to $w$ is column perturbable.  Note that both $\mathcal{P}_s$ and $\mathcal{P}_t$ have odd number of edges. To construct $A^+,$ first adding an $\epsilon$ to the entry corresponding to the first edge in $\mathcal{P}_s,$ and then subtracting an $\epsilon$ from the entry corresponding to the second edge in $\mathcal{P}_s,$  and then adding an $\epsilon$ to the entry corresponding to the third edge in $\mathcal{P}_s,$ and so on until adding or subtracting an $\epsilon$ from all edges in $\mathcal{P}_s$ alternatively. Meanwhile subtracting an $\epsilon$ from the entry corresponding to the first edge in $\mathcal{P}_t,$  and then adding an $\epsilon$ to the entry corresponding to the second edge in $\mathcal{P}_t$, and so on until subtracting or adding an $\epsilon$ on all edges in $\mathcal{P}_t.$ Then by switching all additions and subtractions we get $A^-$. Note that if $\epsilon>0$ is small enough, then $A^+, A^-\in \omega_n^s$ and $A=\frac{1}{2}(A^++A^-).$

\end{proof}

\begin{corollary}\label{Lco1}
Let $A\in \omega_n^s$, and $R(A),S(A)$ be the row sum vector and column sum vector respectively. If $A$ is an extreme point of $\omega_n^s$, then $R(A), S(A)\in \mathcal{RE}(v_n^s).$
\end{corollary}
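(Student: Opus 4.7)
The plan is to derive this directly from Theorem~\ref{thm1} by analyzing what the at-most-one-non-trivial-row-sum constraint forces, combined with the fact that $\sigma(A)=s$. Since $A$ is an extreme point of $\omega_n^s$, the contrapositive of Theorem~\ref{thm1} says that at most one row sum of $A$ lies strictly in the open interval $(0,1)$, and likewise at most one column sum does. Because every row sum of a doubly substochastic matrix lies in $[0,1]$, each row sum must therefore be either $0$, $1$, or equal to a single common fractional value $t\in(0,1)$ that can occur in at most one row.

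Next I would use the total-sum condition. Let $a$ denote the number of rows of $A$ whose sum equals $1$, let $b$ denote the number of rows with sum $0$, and let $\epsilon\in\{0,1\}$ indicate whether a single exceptional row with sum $t\in(0,1)$ is present; then $a+b+\epsilon=n$ and $\sigma(A)=a+\epsilon t=s$. I now split on whether $s$ is an integer. If $s\in\mathbb{Z}$, then $a+\epsilon t=s$ forces $\epsilon=0$ (since otherwise $t=s-a$ would be a non-integer integer), giving $a=s$ and $b=n-s$; the row sum vector is then a rearrangement of $(1,\ldots,1,0,\ldots,0)$ with $s$ ones, which is exactly $v_n^s$ in the integer case. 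If $s\notin\mathbb{Z}$, then $\epsilon=0$ would force $s=a\in\mathbb{Z}$, a contradiction, so $\epsilon=1$; then $t=s-a\in(0,1)$ forces $a=\lfloor s\rfloor$ and $t=s-\lfloor s\rfloor$, so the row sums consist of $\lfloor s\rfloor$ ones, a single $s-\lfloor s\rfloor$, and $n-\lfloor s\rfloor-1$ zeros, which is precisely a rearrangement of $v_n^s$.

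The identical argument applied to columns — which is already covered by Theorem~\ref{thm1} since the statement there treats rows and columns symmetrically — shows $S(A)\in\mathcal{RE}(v_n^s)$. There is no real obstacle here; the whole content is in Theorem~\ref{thm1}, and the corollary is just the arithmetic bookkeeping needed to convert ``at most one row sum strictly between $0$ and $1$'' into the precise multiset prescribed by $v_n^s$. The only subtlety worth flagging explicitly in writing is ruling out the $\epsilon=1$ case when $s$ is an integer and the $\epsilon=0$ case when $s$ is not, both of which follow from the integrality (or non-integrality) of $s-a$.
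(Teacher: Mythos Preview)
Your argument is correct and is exactly the approach the paper intends: the corollary is stated immediately after Theorem~\ref{thm1} with no separate proof, precisely because the contrapositive of that theorem plus the arithmetic bookkeeping you spell out is all that is needed. Your write-up is in fact more detailed than the paper's, which leaves the case analysis implicit; the only cosmetic issue is the phrase ``non-integer integer,'' which you should rephrase as ``$t=s-a$ would be an integer lying in $(0,1)$, which is impossible.''
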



\begin{lemma}
Let $n$ be a positive integer and $n-1 < s \leq n.$  $B_n(s-n+1)$ as defined in \eqref{leieq1} is an extreme point of $\omega_n^s.$
\end{lemma}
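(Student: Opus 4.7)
The plan is to argue directly from the definition of an extreme point: suppose $B_n(\alpha)=\lambda A_1+(1-\lambda)A_2$ with $A_1,A_2\in\omega_n^s$ and $\lambda\in(0,1)$, where I write $\alpha := s-n+1\in(0,1]$, and show $A_1=A_2=B_n(\alpha)$. The easy starting observation is that $\sigma(B_n(\alpha))=n-1+\alpha=s$, so $B_n(\alpha)\in\omega_n^s$, and that nonnegativity forces $A_1$ and $A_2$ to vanish at every position where $B_n(\alpha)$ vanishes; hence both $A_1$ and $A_2$ are supported on the bidiagonal positions $\{(i,i):1\le i\le n\}\cup\{(i+1,i):1\le i\le n-1\}$.

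Next I would parametrize the perturbation. Write $A_1=B_n(\alpha)+E$ and $A_2=B_n(\alpha)-\tfrac{\lambda}{1-\lambda}E$, with $E$ supported on the bidiagonal; set $x_i=E[i,i]$ for $i=1,\ldots,n$ and $y_j=E[j+1,j]$ for $j=1,\ldots,n-1$. The crucial observation is that rows $2,\ldots,n$ and columns $1,\ldots,n-1$ of $B_n(\alpha)$ already have sum exactly $1$. Hence the substochastic constraint on row $i$ ($i\ge 2$) applied to $A_1$ gives $x_i+y_{i-1}\le 0$, while the same constraint on $A_2$ gives $x_i+y_{i-1}\ge 0$; combined, $x_i+y_{i-1}=0$. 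Similarly the column-$j$ constraint ($j\le n-1$) forces $x_j+y_j=0$. The slack row (row 1, sum $\alpha$) and slack column (column $n$, sum $\alpha$) contribute no equality, but none is needed.

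The system $x_i+y_{i-1}=0$ for $i=2,\ldots,n$ together with $x_j+y_j=0$ for $j=1,\ldots,n-1$ forces $x_{j+1}=x_j$ for every $j=1,\ldots,n-1$, so all $x_i$ equal a common value $c$ and each $y_j=-c$. Finally, $\sigma(A_1)=s$ is equivalent to $\sigma(E)=0$, i.e., $nc+(n-1)(-c)=c=0$. Therefore $E=0$, giving $A_1=A_2=B_n(\alpha)$, which proves extremality.

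The main obstacle I foresee is purely bookkeeping: being careful with the two distinct indexings (diagonal versus subdiagonal) so that the system of equalities is written correctly and the telescoping identification $x_{j+1}=x_j$ is extracted cleanly. Once the equalities are in hand, everything collapses in one line using $\sigma(E)=0$; no deeper structural result from the preceding sections (Lemmas \ref{lmn1}, \ref{lm2}, \ref{lm3}) is required, which is fitting since those lemmas are designed to exclude non-extreme points and the bidiagonal $B_n(\alpha)$ is precisely an example where none of their hypotheses apply.
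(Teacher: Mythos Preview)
Your proof is correct and follows essentially the same approach as the paper: both arguments observe that the zero entries of $B_n(\alpha)$ force $A_1,A_2$ to be supported on the bidiagonal, then use that the saturated rows $2,\ldots,n$ and columns $1,\ldots,n-1$ must remain saturated in $A_1$ and $A_2$, and finally invoke $\sigma=s$ to kill the single remaining degree of freedom. Your perturbation framework with $E$ and the explicit linear system $x_i+y_{i-1}=0$, $x_j+y_j=0$ makes the last step (the paper's ``which implies that $A=B_n(\alpha)$'') fully explicit, but the underlying idea is identical.
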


\begin{proof}Suppose $B_n(\alpha)$ is not an extreme point, then there exist $A$ and $B \in \omega_n^s,$ such that
$$B_n(\alpha)=cA+(1-c)B$$ for some $0<c<1.$ Note that both $A$ and $B$ must be in the form
$$\begin{pmatrix}\ x & * & &&& \\ & * & * &&& \\ && * & * && \\ &&& \vdots & \cdots && \\ &&&& \vdots & \cdots & \\ &&&&& * & * \\ &&&&&& *\end{pmatrix}$$
where $x$ and $*$ are all possible positive elements. Secondly, since each row sum of $B_n(\alpha)$ is $1$ except the last one, and each column sum of $B_n(\alpha)$ is $1$ except the first one, $A$ and $B$ must have sum $1$ in these rows and columns meaning that the sum of all elements in $A$ is $n-1+x.$ since $A\in \omega_n^s,$ so $x=s-n+1$ which implies that $A=B_n(\alpha)$, where $\alpha=s-n+1$.
\end{proof}

\begin{corollary}
Let $A\in \omega_n^s$ where $n-1< s\leq n,$ and $\alpha=s-n+1.$ Then $A$ is an extreme point if there exist $n\times n$ permutation matrices $P$ and $Q,$ such that $$PAQ=B_n(\alpha).$$
\end{corollary}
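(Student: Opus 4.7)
The corollary asserts that the property of being a distinguished extreme point is stable under pre- and post-multiplication by permutation matrices, so the plan is to reduce the claim to the preceding lemma by exploiting this symmetry.

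First, I would observe that for any $n \times n$ permutation matrices $P$ and $Q$, the map $\Phi_{P,Q}\colon \omega_n \to \omega_n$ defined by $\Phi_{P,Q}(X) = PXQ$ is an affine bijection of $\omega_n$ onto itself, with inverse $\Phi_{P^T, Q^T}$. Since permuting rows and columns does not change the sum of the entries, we have $\sigma(PXQ) = \sigma(X)$, so $\Phi_{P,Q}$ restricts to an affine bijection of $\omega_n^s$ onto $\omega_n^s$.

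Next, I would show that this affine bijection preserves extreme points. Suppose $A = PAQ$ is the given matrix, so that $A = P^T B_n(\alpha) Q^T$. If $A$ admitted a nontrivial convex decomposition $A = c A_1 + (1-c) A_2$ with $A_1, A_2 \in \omega_n^s$ and $0 < c < 1$, then multiplying on the left by $P$ and on the right by $Q$ would give
\begin{equation*}
B_n(\alpha) = PAQ = c (PA_1Q) + (1-c)(PA_2Q),
\end{equation*}
a convex decomposition inside $\omega_n^s$ (by the first step, each $PA_iQ$ lies in $\omega_n^s$). By the preceding lemma, $B_n(\alpha)$ is an extreme point of $\omega_n^s$, so we must have $PA_1Q = PA_2Q = B_n(\alpha)$, whence $A_1 = A_2 = A$ after multiplying by $P^T$ and $Q^T$. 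This forces the decomposition to be trivial, so $A \in \mathfrak{E}(\omega_n^s)$.

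This argument is essentially a two-line consequence of the lemma; there is no real obstacle beyond noting that multiplication by permutation matrices is an involution-style symmetry of the polytope $\omega_n^s$. The only point worth writing out carefully is that $PA_iQ \in \omega_n^s$, which follows from the fact that $P$ and $Q$ simply permute rows and columns (so row and column sums remain in $[0,1]$, nonnegativity is preserved, and the total sum is unchanged).
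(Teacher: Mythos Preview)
Your proposal is correct and matches the paper's approach: the paper states this corollary without proof, treating it as an immediate consequence of the preceding lemma that $B_n(\alpha)$ is extreme in $\omega_n^s$; you have simply spelled out the underlying reason, namely that $X \mapsto PXQ$ is an affine automorphism of $\omega_n^s$ and hence carries extreme points to extreme points. One small notational slip: the line ``Suppose $A = PAQ$ is the given matrix'' should read ``Suppose $PAQ = B_n(\alpha)$, so that $A = P^{T} B_n(\alpha) Q^{T}$''; the rest of the argument makes clear this is what you intended.
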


\begin{corollary}\label{Lco2} Let $A\in \omega_n^s$ where $n-1< s\leq n$. $A$ is an extreme point of $\omega_n^s,$ if there exist $n\times n$ permutation matrices $P$ and $Q,$ such that
$$PAQ=I_{n-m}\oplus B_m(s-n+1)$$
for some $0\leq m\leq n,$ where $I_{n-m}$ is the identity matrix with order $n-m.$
\end{corollary}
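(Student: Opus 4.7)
The plan is to reduce to the unpermuted case and then track how the support of $A$ together with its row and column sum constraints force every entry in any convex decomposition.

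First I would observe that for any permutation matrices $P,Q$, the map $B\mapsto PBQ$ is a linear bijection of $\omega_n^s$ onto itself that preserves (and reflects) extremality, so it suffices to prove that $C:=I_{n-m}\oplus B_m(\alpha)$ with $\alpha=s-n+1$ is an extreme point of $\omega_n^s$. Suppose then that $C=cA_1+(1-c)A_2$ with $0<c<1$ and $A_1,A_2\in\omega_n^s$; my goal is to show $A_1=A_2=C$.

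I would extract four facts in sequence. (1) Since $A_1,A_2$ are nonnegative and every zero entry of $C$ is a convex combination of two nonnegatives equal to zero, the supports of $A_1$ and $A_2$ are contained in the support of $C$. (2) In $C$, the first $n-m$ rows and the last $m-1$ rows have sum exactly $1$, and columns $1,\ldots,n-1$ have sum exactly $1$. Since $A_1,A_2\in\omega_n$ have all row and column sums at most $1$, a convex combination reaching $1$ forces both to equal $1$; hence $A_1$ and $A_2$ share these row and column sums with $C$. (3) Using $\sigma(A_i)=s$ and subtracting the $n-1$ contributed by the rows pinned in (2), the row $n-m+1$ and column $n$ of each $A_i$ must sum to $s-(n-1)=\alpha$. (4) Combining (1) with these row/column sum constraints, I would recover every entry inductively: in the identity block each diagonal position $(i,i)$ is the unique support entry in its row and its column, and with row sum $1$ this forces value $1$ in both $A_j$; in the $B_m(\alpha)$ block, the single support entry of row $n-m+1$ must equal the row sum $\alpha$, then column $n-m+1$ has only two support entries with the top one fixed at $\alpha$ and total $1$, forcing the bottom one to be $1-\alpha$, then row $n-m+2$ has only two support entries summing to $1$ with the left fixed at $1-\alpha$, so the right equals $\alpha$, and so on down the bidiagonal until the final entry $\alpha$ in position $(n,n)$.

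The conclusion is $A_1=A_2=C$, and pulling back through $P,Q$ gives $A\in\mathfrak{E}(\omega_n^s)$. The only subtle step is the inductive propagation through the $B_m(\alpha)$ block; it works cleanly precisely because the support of $B_m(\alpha)$ forms a tree (a path), so at each inductive step exactly one new entry is added and its value is pinned by a single linear equation. No cycle exists to obstruct uniqueness, which is consistent with Lemma~\ref{lmn1} and the forest characterization developed earlier.
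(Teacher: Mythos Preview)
Your proof is correct and follows essentially the same approach as the paper: the paper first proves the lemma that $B_n(\alpha)$ is extreme by exactly your method (support containment from nonnegativity, then pinning row/column sums to $1$ via the substochastic constraint, then using $\sigma=s$ to fix the remaining row and hence all entries), and then deduces the corollary in one line by noting that the identity block and the $B_m$ block are each ``uniquely determined by themselves.'' You have simply merged the lemma and the corollary into a single direct argument and written out the inductive propagation through the bidiagonal more explicitly; the underlying idea is identical.
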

\begin{proof}
Because both $I_{n-m}$ and $ B_m(s-n+1)$ are uniquely determined by themselves, $PAQ=I_{n-m}\oplus B_m(s-n+1)$ is an extreme point.
\end{proof}

\noindent \textit{Proof of Theorem \ref{main}.} \ $\eqref{i1}\Rightarrow\eqref{i2}.$ It is due to Corollary \ref{Lco1}.

$\eqref{i2}\Rightarrow\eqref{i3}.$ This is due to the method of constructing elements in $\mathfrak{E}_n(R,S)$ provided in Section 2, \cite{TR1}.


$\eqref{i3}\Rightarrow\eqref{i1}.$ It is due to Corollary \ref{Lco2}.

$\eqref{i3}\Leftrightarrow\eqref{i4}.$ It is trivial.

\section{Another approach to the extreme points of $\omega_n^s$}\label{sec3}
In this section, we develop an algorithm to find extreme points of $\omega_n^s,$ through which one may express a matrix in $\mathfrak{E}_n(R,S)$ as a convex combination of matrices in $\mathfrak{E}(\omega_n^s)$. It can be treated as an alternative proof of Theorem~\ref{main}. According to Proposition~\ref{prop:increl}, recall that
\begin{equation*}
\mathfrak{E}(\omega_n^s)\subseteq\bigcup_{|R|=|S|=s \atop 0\leq r_i, s_j \leq 1\ } \mathfrak{E}_n(R,S).
\end{equation*}
In this section we will give more clear relation between $\mathfrak{E}_n(R,S)$ and $\mathfrak{E}(\omega_n^s)$.
Based on this result the relation between $\mathfrak{E}(\omega_n^s)$ and $\mathfrak{E}(\omega_{n,k})$ will be given in section~\ref{sec4}.


We first describe the inductive procedure given by Jurkat and Ryser~\cite{TR1} to construct the extreme points of $\omega_n(R,S)$ for given $R$ and $S$. To obtain an arbitrary matrix $A\in\mathfrak{E}_n(R,S)$, we select a position $(i, j)$ in $A$ and define
\begin{equation*}
a_{i,j}=\min\{r_i, s_j\}.
\end{equation*}

If $r_i\leq s_j$, then we can complete row $i$ by inserting $(n-1)\ 0$'s. The submatrix $A_1$ obtained from $A$ by deleting row $i$ is then a matrix of size $(n-1)\times n$ whose row sum vector is denoted by
\begin{equation}\label{eq:rowsvec}
R_1=(r_1,\ldots,r_{i-1},\times,r_{i+1},\ldots,r_n).
\end{equation}
Here the $"\times"$ symbol at the $i$th entry means that we have finished the construction of row $i$.
The compatible column sum vector should be
\begin{equation}\label{eq:colsvec}
S_1=(s_1,\ldots,s_{j-1},s_j-r_i,s_{j+1},\ldots,s_n).
\end{equation}
Similarly if $r_i\geq s_j$, then we complete column $j$ by inserting $(n-1)\ 0$'s. We require that the submatrix $A_1$ obtained from $A$ by deleting column $j$ be a matrix of size $n\times (n-1)$ with row sum vector
\begin{equation}\label{eq:colsvec2}
R_1=(r_1,\ldots,r_{i-1},r_i-s_j,r_{i+1},\ldots,r_n).
\end{equation}
The compatible column sum vector should be
\begin{equation}\label{eq:rowsvec2}
S_1=(s_1,\ldots,s_{j-1},\times,s_{j+1},\ldots,s_n).
\end{equation}
In case that $r_i=s_j$ either of the above constructions is allowed. Keep doing this process and we can get all elements in $A$.

To illustrate this process more clearly, we can make use of the weighted directed bipartite graphs. Associated with each $n\times n$ matrix $A$ in $\mathfrak{E}(R,S)$, there is a weighted bipartite graph $\mathcal{G}(A)$ with vertex set $\{r_1, r_2,\ldots, r_n\}\cup\{s_1, s_2,\ldots, s_n\}$. To obtain $\mathcal{G}(A)$, we simply follow the construction procedure. For the selected position $(i,j)$ in $A$, we compare the value of $r_i$ and $s_j$. If $r_i\leq s_j$, there is a directed edge from $r_i$ to $s_j$ with weight $r_i$. If $r_i\geq s_j$, there is a directed edge from $s_j$ to $r_i$ with weight $s_j$. If $r_i=s_j$ then we can do either one of the above steps. Then we do the same way for the row and column sum vector $R_1$ and $S_1$. Keep doing this process and by induction we can get $\mathcal{G}(A)$. Notice that the edges constructed in this way have an order which coincides with the order of the construction of elements in $A$.

\begin{example}
Let $R=(0.6, 0.9,0.7,0.4,0.8)$ and $S=(0.8,0.7,0.9,0.6,0.4)$. We first compare $r_1=0.6$ and $s_2=0.8$ to get $a_{1,1}=0.6$ and $a_{1,2}=a_{1,3}=a_{1,4}=a_{1,5}=0$. The new row and column sums are $R_1=(\times, 0.9,0.7,0.4,0.8)$ and $S_1=(0.2,0.7,0.9,0.6,0.4)$. Then we compare $r_4=0.4$ and $s_2=0.7$ to get $a_{4,2}=0.4$ and $a_{4,1}=a_{4,3}=a_{4,4}=a_{4,5}=0$. The new row and column sums are $R_2=(\times, 0.9,0.7,\times,0.8)$ and $S_1=(0.2,0.3,0.9,0.6,0.4)$. Then we compare $r_2=0.9$ and $s_2=0.3$ to get $a_{2,2}=0.3$ and $a_{3,2}=a_{5,2}=0$. The new row and column sums are $R_3=(\times, 0.6,0.7,\times,0.8)$ and $S_1=(0.2,\times,0.9,0.6,0.4)$. We compare $r_2=0.6$ and $s_3=0.9$ to get $a_{2,3}=0.6$ and $a_{2,1}=a_{2,4}=a_{2,5}=0$. The new row and column sums are $R_4=(\times, \times ,0.7,\times,0.8)$ and $S_1=(0.2,\times,0.3,0.6,0.4)$. We compare $r_3=0.7$ and $s_3=0.3$ to get $a_{3,3}=0.3$ and $a_{5,3}=0$. The new row and column sums are $R_5=(\times, \times ,0.4,\times,0.8)$ and $S_1=(0.2,\times,\times,0.6,0.4)$. We compare $r_3=0.4$ and $s_4=0.6$ to get $a_{3,4}=0.4$ and $a_{3,1}=a_{3,5}=0$. The new row and column sums are $R_5=(\times, \times ,\times,\times,0.8)$ and $S_1=(0.2,\times,\times, 0.2,0.4)$. We compare $r_5=0.8$ and $s_4=0.2$ to get $a_{5,4}=0.2$. The new row and column sums are $R_5=(\times, \times ,\times,\times,0.6)$ and $S_1=(0.2,\times,\times,\times,0.4)$. We compare $r_5=0.6$ and $s_1=0.2$ to get $a_{5,1}=0.2$ and $a_{4,1}=0$. The new row and column sums are $R_5=(\times, \times ,\times,\times,0.4)$ and $S_1=(\times,\times,\times,\times,0.4)$, which implies $a_{5,5}=0.4$. The matrix is
\begin{equation*}
\begin{pmatrix}
0.6 & 0 & 0 & 0 & 0 \\
0  & 0.3 & 0.6 & 0 & 0\\
0  & 0 & 0.3  & 0.4 & 0\\
0  & 0.4 & 0 & 0 & 0\\
0.2 & 0 & 0 & 0.2 & 0.4
\end{pmatrix}.
\end{equation*}
The corresponding weighted directed bipartite graph is





\begin{equation*}
\xymatrix{
0.6 \ar@/^/[r]|-{0.6}  & 0.8 \ar@/_4pc/[ddddl]|-{0.2} \\
0.9 \ar@/_/[dr]|-{0.6} & 0.7 \ar@/_/[l]|-{0.3} \\
0.7 \ar@/_/[dr]|-{0.4} & 0.9 \ar@/^/[l]|-{0.3}\\
0.4 \ar@/_3pc/[uur]|-{0.4} & 0.6\ar@/^/[dl]|-{0.2}\\
0.8  \ar@/_/[r]|-{0.4}& 0.4 }
\end{equation*}
\end{example}

From the bipartite graph, we know that permuting the entries in the row or column sum vector does not impact the constructing of the row and column elements in $A$. Thus the order of the row sum elements in $R$ or the order of the column sum elements in $S$ does not impact the construction of the matrix.
\begin{lemma}
Let $R=(r_1,r_2,\ldots,r_n)$ be the row sum vector and $S=(s_1,s_2,\ldots,s_n)$ be the column sum vector. Both $R$ and $S$ satisfy the compatible condition $|R|=|S|$. For any $\pi$ ranging over all permutations of $1,\ldots,n$, let $\pi(R)=(r_{\pi(1)},r_{\pi(2)},\ldots, r_{\pi(n)})$. Similarly, for any $\tau$ ranging over all permutations of $1,\ldots,n$, let $\tau(S)=(s_{\tau(1)},s_{\tau(2)},\ldots,s_{\tau(n)})$. Then up to row and column permutations,
\begin{equation}\label{eq:perex}
\mathfrak{E}(R,S)=\mathfrak{E}(\pi(R),\tau(S)).
\end{equation}
\end{lemma}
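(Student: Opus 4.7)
The plan is to exhibit an explicit bijection between $\mathfrak{E}(R,S)$ and $\mathfrak{E}(\pi(R),\tau(S))$ realized by row and column permutations, from which the identification up to these permutations is immediate. First I would fix the permutation matrices $P_\pi$ and $P_\tau$ determined by $(P_\pi)_{i,j}=\delta_{\pi(i),j}$ and $(P_\tau)_{i,j}=\delta_{\tau(i),j}$, and consider the linear map $\Phi: A \mapsto P_\pi A P_\tau^T$. A direct calculation shows that the $i$th row sum of $\Phi(A)$ equals $r_{\pi(i)}$ and the $j$th column sum equals $s_{\tau(j)}$, while all entries are merely relabeled; in particular $\Phi(A)\geq 0$ with the same support pattern (up to relabeling), so $\Phi$ maps $\omega_n(R,S)$ into $\omega_n(\pi(R),\tau(S))$.

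Next I would verify that $\Phi$ is a bijection, with inverse $\Phi^{-1}(B)=P_\pi^T B P_\tau$, since $P_\pi$ and $P_\tau$ are orthogonal. Because $\Phi$ is linear, it preserves convex combinations: for any $\lambda\in[0,1]$ and $A_1,A_2\in\omega_n(R,S)$, $\Phi(\lambda A_1+(1-\lambda)A_2)=\lambda\Phi(A_1)+(1-\lambda)\Phi(A_2)$. Hence $A$ admits a nontrivial convex decomposition within $\omega_n(R,S)$ if and only if $\Phi(A)$ admits a nontrivial convex decomposition within $\omega_n(\pi(R),\tau(S))$. Consequently $\Phi$ restricts to a bijection $\mathfrak{E}(R,S)\to\mathfrak{E}(\pi(R),\tau(S))$, and every element on the right is obtained from an element on the left by a row permutation (via $P_\pi$) and a column permutation (via $P_\tau^T$), which is precisely the meaning of the asserted equality \eqref{eq:perex}.

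The argument is essentially bookkeeping; the one point requiring care is choosing the convention for $P_\pi$ and $P_\tau$ so that the row and column sum vectors transform to $\pi(R)$ and $\tau(S)$ rather than to their inverses, but with the convention above this is a one-line check. No deeper structural property of $\omega_n(R,S)$ is needed, and in particular the characterization of extreme points from Proposition~\ref{PropLei1} is not required — only the definition of extreme point together with the linearity and invertibility of $\Phi$.
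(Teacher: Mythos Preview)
Your argument is correct, and it is in fact more complete than what the paper offers. The paper does not give a formal proof of this lemma at all: the only justification is the sentence immediately preceding the statement, namely that in the Jurkat--Ryser inductive construction of extreme points of $\omega_n(R,S)$ one selects positions $(i,j)$ freely and compares $r_i$ with $s_j$, so the order in which the entries of $R$ and $S$ are listed is irrelevant to which matrices get produced. In other words, the paper's approach is to observe that the \emph{construction procedure} for $\mathfrak{E}_n(R,S)$ is manifestly permutation-equivariant.

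Your approach is different and more structural: you exhibit the affine isomorphism $\Phi(A)=P_\pi A P_\tau^T$ between the two polytopes $\omega_n(R,S)$ and $\omega_n(\pi(R),\tau(S))$ and note that any affine bijection carries extreme points to extreme points. This is cleaner in that it does not rely on the Jurkat--Ryser description of $\mathfrak{E}_n(R,S)$, only on the definition of extreme point and the fact that permutation matrices are invertible. The paper's route has the advantage of being a one-line remark in the context already set up; yours has the advantage of being a self-contained proof that would survive even without Proposition~\ref{PropLei1} or the construction procedure. Your caution about the convention for $P_\pi$ and $P_\tau$ is well placed, and with the convention you chose the row and column sums do transform as claimed.
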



Recall that for $0\leq\alpha\leq 1$, the matrix $B_m(\alpha)$ is the $m\times m$ matrix as follows:
\begin{equation*}
B_m(\alpha)=\begin{pmatrix}
\alpha & 0 & \cdots & 0 \\
1-\alpha & \alpha & \cdots & 0\\
\vdots & \ddots & \ddots &\vdots\\
0      &  \cdots &  1-\alpha & \alpha
\end{pmatrix}.
\end{equation*}
\begin{theorem}\label{thm:stanext}
$B_m(\alpha)$ is an extremal matrix of $\omega_m^{m-1+\alpha}$ for all $m\geq 1$.
\end{theorem}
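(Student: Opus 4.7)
The plan is to argue directly from the definition of extreme point. Suppose $B_m(\alpha)=cA+(1-c)B$ for some $0<c<1$ and $A,B\in\omega_m^{m-1+\alpha}$. I will show $A=B=B_m(\alpha)$, which forces $B_m(\alpha)$ to be extremal.

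\textbf{Step 1: Inherit the support.} Since $A$ and $B$ have nonnegative entries and $B_m(\alpha)$ is a convex combination of them with strictly positive coefficients, every zero entry of $B_m(\alpha)$ is zero in both $A$ and $B$. Thus the supports of $A$ and $B$ are contained in the bidiagonal $\{(i,i)\}\cup\{(i+1,i)\}$.

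\textbf{Step 2: Pin down the row and column sums of $A$ (and $B$).} Rows $2,3,\ldots,m$ of $B_m(\alpha)$ each sum to $1$, which is the maximum row sum permitted in $\omega_m$. Writing $r_i(B_m(\alpha))=c\,r_i(A)+(1-c)\,r_i(B)$ with $r_i(A),r_i(B)\leq 1$, the fact that the convex combination attains the maximum forces $r_i(A)=r_i(B)=1$ for $i=2,\ldots,m$. The same reasoning on columns $1,\ldots,m-1$ gives $s_j(A)=s_j(B)=1$ for $j=1,\ldots,m-1$. Since $\sigma(A)=m-1+\alpha$, subtracting yields $r_1(A)=\alpha$ and $s_m(A)=\alpha$, and likewise for $B$.

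\textbf{Step 3: Propagate the entries recursively.} Because the only candidate support in row $1$ is $(1,1)$, we get $a_{1,1}=r_1(A)=\alpha$. Then column $1$ has only $(1,1)$ and $(2,1)$ in its support, so $a_{2,1}=s_1(A)-a_{1,1}=1-\alpha$. Then row $2$ has only $(2,1)$ and $(2,2)$, giving $a_{2,2}=r_2(A)-a_{2,1}=\alpha$. Continuing alternately along the column and the row constraints, each subsequent entry is forced, yielding $A=B_m(\alpha)$, and symmetrically $B=B_m(\alpha)$.

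\textbf{Main obstacle.} There is essentially no obstacle beyond careful bookkeeping; every step is a consequence either of nonnegativity plus maximality of the row/column sums or of the unique solvability of a triangular system. The only point worth watching is the degenerate case $\alpha=0$, where many entries vanish, but the same recursion applies verbatim because the argument uses only the exact values of the row and column sums, never their strict positivity. Alternatively, one could simply invoke the lemma already established in the paper (the statement immediately preceding Corollary~\ref{Lco2}), which gives this fact directly; the present theorem is a standalone restatement suitable for use in the algorithmic description that follows.
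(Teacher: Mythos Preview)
Your proof is correct and follows essentially the same approach as the paper's. Both arguments use nonnegativity to inherit the zero pattern, then use maximality of the row and column sums equal to $1$ to force $A$ and $B$ to share those sums with $B_m(\alpha)$; you simply spell out the final triangular recursion that the paper leaves implicit in its ``Therefore, $A_1=A_2=B_m(\alpha)$''.
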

\begin{proof}
Assume that there exists $A_1, A_2\in\omega_m^{m-1+\alpha}$ such that
\begin{equation*}
B_m(\alpha)=\lambda A_1+(1-\lambda) A_2,
\end{equation*}
where $0<\lambda<1$. Notice that if the $(i,j)$ entry of $B_m(\alpha)$ is equal to $0$ or $1$, then that entry of $A_1$ and $A_2$ should also be the same value as in $A$. All the columns of $B_m(\alpha)$ from the first to the last second have the same column sum $1$, which implies that the columns of both $A_1$ and $A_2$ must also satisfy this property. For the same reason the second until the last row sums of $A_1$ and $A_2$ are all equal to $1$. Therefore, $A_1=A_2=B_m(\alpha)$.
\end{proof}
The following theorem gives a description of the extremal matrices of $\omega_n^s(R_{n_0},S_{n_0})$, where $R_{n_0}=S_{n_0}=v_n^s=(1,\ldots,1,s-\lfloor s\rfloor,0,\ldots,0)$.
\begin{theorem}\label{thm:extrs}
An $n\times n$ matrix $A$ is an extremal matrix of the set $\omega_n^s(R_{n_0},S_{n_0})$ where $R_{n_0}=S_{n_0}=v_n^s=(1,\ldots,1,s-\lfloor s\rfloor, 0,\ldots,0)$ if and only if $A$ can be permuted into the following direct sum form
\begin{equation}\label{eq:extmat}
I_{\lceil s\rceil-m}\oplus B_m(s+1-\lceil s\rceil) \oplus O_{n-\lceil s\rceil}
\end{equation}
for some nonnegative integer $0\leq m\leq \lceil s\rceil$.
\end{theorem}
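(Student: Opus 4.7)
The plan is to identify $\omega_n^s(R_{n_0},S_{n_0})$ with the transportation polytope $\omega_n(R_{n_0},S_{n_0})$ (since $|R_{n_0}|=|S_{n_0}|=s$, the constraint $\sigma(A)=s$ is already implied by the prescribed row and column sums), and then apply the Jurkat--Ryser characterization recalled in Proposition~\ref{PropLei1}: $A$ is extremal iff the bipartite graph $G_A$ is a forest. The theorem then reduces to a combinatorial classification of the forests compatible with $v_n^s=(1,\ldots,1,\alpha,0,\ldots,0)$, where $\alpha:=s+1-\lceil s\rceil\in(0,1]$ (with $\alpha=1$ exactly when $s\in\mathbb{Z}$).

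For the sufficient direction, if $PAQ=I_{\lceil s\rceil-m}\oplus B_m(\alpha)\oplus O_{n-\lceil s\rceil}$, I would observe that $G_{PAQ}$ is the disjoint union of $\lceil s\rceil-m$ single edges (from the identity block), a zigzag path on $2m$ vertices (from the $B_m(\alpha)$ block), and $2(n-\lceil s\rceil)$ isolated vertices (from the zero block). This is manifestly a forest, so $PAQ$, and hence $A$, is extremal by Proposition~\ref{PropLei1}.

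For the necessary direction, I take $A$ extremal, so $G_A$ is a forest, and analyze each connected component $T$ using the balance $\sum_{i\in R_T}r_i=\sum_{j\in C_T}s_j$. First, when $s\notin\mathbb{Z}$, the fractional row and fractional column must lie in the same component, since otherwise one side of this identity has the form (integer)$+\alpha$ while the other is an integer. Second, for any component $T$ made up entirely of full rows and full columns, picking any leaf row $r$ forces its unique positive entry $a_{r,c}=1$; the column-sum constraint then makes every other entry of column $c$ vanish, so $c$ is a leaf as well and $T$ reduces to the single edge $\{r,c\}$ of weight $1$. These furnish the $\lceil s\rceil-m$ identity blocks.

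Finally, for the unique component $T^\ast$ containing both fractional vertices, the same leaf-peeling argument rules out every full row or full column as a leaf of $T^\ast$ (such a leaf would cleave off a one-edge component and disconnect it from the fractional vertices). Hence the only two leaves of $T^\ast$ are the fractional row and the fractional column, so $T^\ast$ is a path alternating between row and column vertices. Propagating the row/column sum equations along this path from the fractional endpoint forces the $2m-1$ positive entries to alternate as $\alpha,1-\alpha,\alpha,\ldots,1-\alpha,\alpha$, which is precisely the nonzero pattern of $B_m(\alpha)$ with $m=|R_{T^\ast}|=|C_{T^\ast}|$. Assembling the identity blocks, the $B_m(\alpha)$ block, and the remaining $n-\lceil s\rceil$ zero rows and columns via suitable permutation matrices $P,Q$ produces the claimed direct-sum form. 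The main obstacle is this last paragraph --- upgrading the soft combinatorial forest condition into the rigid $B_m(\alpha)$ block structure, and in particular ruling out full rows and columns as leaves of $T^\ast$, which is what forces the path to be a single zig-zag spanning both fractional vertices.
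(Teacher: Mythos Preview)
Your proof is correct and takes a genuinely different route from the paper's argument. For the necessary direction, the paper proceeds by induction along the Jurkat--Ryser construction: it selects a position $(i,j)$, sets $a_{i,j}=\min\{r_i,s_j\}$, and reduces to a strictly smaller instance according to which of the values $0,\,\alpha,\,1$ the selected marginals take, handling several cases. You instead work directly with the forest structure of $G_A$: the component balance identity $\sum_{i\in R_T}r_i=\sum_{j\in C_T}s_j$ forces the fractional row and fractional column into a single component $T^\ast$; a leaf-peeling argument collapses every all-integer component to a single edge; and the same peeling rules out any full row or column as a leaf of $T^\ast$, so that $T^\ast$ is a path whose two endpoints are precisely the fractional margins, at which point propagating the sum constraints along the path recovers $B_m(\alpha)$. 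Your argument is more structural and explains transparently \emph{why} the $B_m(\alpha)$ block is forced; the paper's inductive version is more algorithmic and dovetails with the explicit Jurkat--Ryser decomposition procedure it exploits later in Section~\ref{sec3}.

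One small technical point: in your sufficient direction you invoke Proposition~\ref{PropLei1}(v), but that condition reads ``forest \emph{with no isolated vertex},'' which fails literally whenever $n>\lceil s\rceil$ (the zero rows and columns produce isolated vertices). You should either cite one of the equivalent conditions (ii)--(iv) of Proposition~\ref{PropLei1}, which are insensitive to isolated vertices, or first strip off the $n-\lceil s\rceil$ zero rows and columns and apply (v) to the remaining $\lceil s\rceil\times\lceil s\rceil$ block. Also, you should make explicit the degenerate case $s\in\mathbb{Z}$ (so $\alpha=1$), where there are no fractional margins at all and every nontrivial component is a single edge; you allude to it with ``$\alpha=1$ exactly when $s\in\mathbb{Z}$'' but the body of your argument for $T^\ast$ tacitly assumes $\alpha<1$.
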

\begin{proof}
Assume that $A$ can be permuted into the form as in~(\ref{eq:extmat}). If there exists $A_1, A_2\in\omega_n^s(R_{n_0},S_{n_0})$ such that
\begin{equation*}
I_{\lceil s\rceil-m}\oplus B_m(s+1-\lceil s\rceil) \oplus O_{n-\lceil s\rceil} =\lambda A_1+(1-\lambda) A_2,
\end{equation*}
where $0<\lambda<1$.
Then we can write $A_1=I_{\lceil s\rceil-m}\oplus A_1' \oplus O_{n-\lceil s\rceil}$ and $A_2=I_{\lceil s\rceil-m}\oplus A_2' \oplus O_{n-\lceil s\rceil}$.
By Theorem~\ref{thm:stanext}, $B_{m}(s+1-\lceil s\rceil) $ is extremal, which implies that $A_1'=A_2'=B_{m}(s+1-\lceil s\rceil)$. Therefore $A\in\mathfrak{E}_n(R_{n_0},S_{n_0})$.

For each $A\in\mathfrak{E}_n(R_{n_0},S_{n_0})$ , we prove by induction that $A$ can be permuted into the form $I_{\lceil s\rceil-m}\oplus B_m(s+1-\lceil s\rceil) \oplus O_{n-\lceil s\rceil}$. According to the construction of matrices in $\mathfrak{E}_n(R_{n_0},S_{n_0})$ given by Jurkat and Ryser~\cite{TR1}, we first select a position $(i,j)$ in $A$ and let $a_{i,j}=\min\{r_i, s_j\}$. If both $r_i$ and $s_j$ are equal to $1$, then $a_{i,j}=1$. In this case, we complete row $i$ and column $j$ by inserting $(2n-1)\ 0$'s. Let $A_1$ be the submatrix obtained from $A$ by removing row $i$ and column $j$. Thus, the row and column sum vector of $A_1$, denoted by $R_{n_1}$ and $S_{n_1}$, are just obtained from $R_{n_0}$ and $S_{n_0}$ by removing $r_i$ and $s_j$ respectively. Compared with $R_{n_0}$ and $S_{n_0}$, both $R_{n_1}$ and $S_{n_1}$ of $A_1$ contain one less entry $1$. Thus $A_1$ must be in the class of the extremal matrices $\mathfrak{E}_{n-1}(R_{n_1},S_{n_1})$. By the induction hypothesis, $A_1$ can be permuted into the direct sum form as in~(\ref{eq:extmat}), which implies that $A$ satisfies the assertion. The case when both $r_i$ and $s_j$ are equal to $0$ or $s+1-\lceil s\rceil$ can be proved similarly.

Next we consider the cases when both $r_i$ and $s_j$ are not equal to zero, which means either $r_i=s+1-\lceil s\rceil, r_j=1$ or $r_i=1, r_j= s+1-\lceil s\rceil$. We only need to consider the case when $r_i=s+1-\lceil s\rceil, r_j=1$. Otherwise we can interchange the rows and columns by transposing the matrix. Thus $a_{i,j}=\min\{r_i,s_j\}=s+1-\lceil s\rceil$ and complete row $i$ by inserting $(n-1)\ 0$'s. Removing row $i$ from $A$ we get a matrix $A_1$ of size $(n-1)\times n$ with row sum vector $R_1$ and column sum vector $S_1$, as described in the construction of $A$. We may write
\begin{equation*}
R_1=(r_1,\ldots,r_{i-1},\times,r_{i+1},\ldots,r_n),
\end{equation*}
where each entry in $R_1$ is either $1$ or $0$.
The compatible column sum vector should be
\begin{equation*}
S_1=(s_1,\ldots,s_{j-1}, \lceil s\rceil-s,s_{j+1},\ldots,s_n).
\end{equation*}
Therefore, the only nonzero element in the $j$th column of $A_1$ besides $a_{i,j}$ is $\lceil s\rceil-s$. Removing column $j$ from $A_1$ we get an $(n-1)\times (n-1)$ matrix $A_2$ whose row and column sum vectors can be rearranged into an $(n-1)$-dimensional vector $(1,\ldots,1,s-\lfloor s\rfloor,0,\ldots,0)$ with $(\lfloor s\rfloor-1) 1$'s. By induction hypothesis, the assertion holds.

Suppose that either $r_i$ or $s_j$ is equal to zero, simply follow the contruction until both the row sum element and the column sum element compared in the pair are nonzero. This then can be reduced into one of the above cases discussed before. Thus we prove the theorem.
\end{proof}

\begin{corollary}\label{cor:ext}
If $A\in\mathfrak{E}_n(R_{n_0},S_{n_0})$ where $R_{n_0}=S_{n_0}=(1,\ldots,1,s-\lfloor s\rfloor, 0,\ldots,0)$, then $A\in\mathfrak{E}(\omega_n^s)$.
\end{corollary}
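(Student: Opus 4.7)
The plan is to reduce the corollary directly to the two theorems already proved in this section, namely Theorem~\ref{thm:extrs} and Theorem~\ref{thm:stanext}. First I would use Theorem~\ref{thm:extrs} to put $A$ in its canonical block-diagonal form; then, after ruling out perturbations on the $I$ and $O$ summands, what remains is a convex decomposition of the middle $B_m$ summand inside $\omega_m^{m-1+\alpha}$, at which point Theorem~\ref{thm:stanext} finishes the job.

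Concretely, by Theorem~\ref{thm:extrs} there exist permutation matrices $P, Q$ such that
\[ PAQ = I_{\lceil s\rceil - m} \oplus B_m(\alpha) \oplus O_{n-\lceil s\rceil}, \qquad \alpha := s + 1 - \lceil s\rceil, \]
for some $0 \le m \le \lceil s\rceil$. Since extremality in $\omega_n^s$ is invariant under simultaneous row/column permutations (these preserve the total sum and the substochastic inequalities), I may assume $A$ is already in this standard form. Suppose that $A = \lambda A_1 + (1-\lambda)A_2$ with $A_1, A_2 \in \omega_n^s$ and $0 < \lambda < 1$; the goal is to conclude $A_1 = A_2 = A$.

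The next step is to pin down the block structure of $A_1$ and $A_2$. Wherever $A_{ij}=0$, nonnegativity of the summands and $\lambda, 1-\lambda > 0$ force $(A_1)_{ij}=(A_2)_{ij}=0$; in particular both matrices vanish outside the support of the three block summands. For each row that contains a $1$ coming from the $I_{\lceil s\rceil - m}$ block, the row sum of $A$ equals $1$, so averaging two row sums of $A_1, A_2$ that are each $\le 1$ forces both of them to equal $1$, and the unique admissible nonzero entry in that row of $A_i$ must therefore equal $1$. The same argument applied to columns shows the $I$-block is frozen. Hence $A_i = I_{\lceil s\rceil - m} \oplus A_i' \oplus O_{n-\lceil s\rceil}$, where each $A_i'$ is $m \times m$, nonnegative, with row and column sums at most $1$.

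Finally, from $\sigma(A_i) = s$ I read off $\sigma(A_i') = s - (\lceil s\rceil - m) = m-1+\alpha$, so $A_i' \in \omega_m^{m-1+\alpha}$, and equating the middle block in the decomposition of $A$ gives $B_m(\alpha) = \lambda A_1' + (1-\lambda)A_2'$. Theorem~\ref{thm:stanext} says $B_m(\alpha)$ is extreme in $\omega_m^{m-1+\alpha}$, which forces $A_1' = A_2' = B_m(\alpha)$ and hence $A_1 = A_2 = A$. I do not foresee a real obstacle here: the only bookkeeping to be careful about is verifying that the restricted sums of $A_i'$ truly land in $\omega_m^{m-1+\alpha}$ (as opposed to a smaller polytope), which is immediate once the $I$ and $O$ summands are shown to be rigid.
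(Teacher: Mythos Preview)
Your proof is correct and follows essentially the same route as the paper: apply Theorem~\ref{thm:extrs} to put $A$ in the block form $I\oplus B_m(\alpha)\oplus O$, argue that any convex summands $A_1,A_2\in\omega_n^s$ must share this block structure, and then invoke Theorem~\ref{thm:stanext} on the middle block. The only difference is that you spell out explicitly \emph{why} $A_1$ and $A_2$ inherit the block form (zeros in $A$ force zeros in the summands, and saturated row/column sums freeze the $I$-block), whereas the paper simply asserts this step.
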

\begin{proof}
If $A\in\mathfrak{E}_n(R_{n_0},S_{n_0})$  where $R_{n_0}=S_{n_0}=(1,\ldots,1,s-\lfloor s\rfloor, 0,\ldots,0)$, then by Theorem~\ref{thm:extrs}, $A$ can be permuted into the form $I_{\lceil s\rceil-m}\oplus B_m(s+1-\lceil s\rceil) \oplus O_{n-\lceil s\rceil}$ for some nonnegative integer $0\leq m\leq \lceil s\rceil$. Suppose that there exist $B, C\in\omega_n^s$ such that
\begin{equation*}
I_{\lceil s\rceil-m}\oplus B_m(s+1-\lceil s\rceil) \oplus O_{n-\lceil s\rceil}=\lambda B +(1-\lambda)C,
\end{equation*}
where $0<\lambda<1$. Then both $B$ and $C$ must be in the form
\begin{align*}
B&=I_{\lceil s\rceil-m}\oplus B_1 \oplus O_{n-\lceil s\rceil},\\
C&=I_{\lceil s\rceil-m}\oplus C_1 \oplus O_{n-\lceil s\rceil},
\end{align*}
where $B_1, C_1$ are two $m\times m$ matrices.
According to Theorem~\ref{thm:stanext}, since $B_m(s+1-\lceil s\rceil)$ is extremal, we know that $B_1=C_1=B_{m}(s+1-\lceil s\rceil)$.
\end{proof}
\begin{example}
Let $n=4, s=3.6$, then we can find all extremal matrices with row and column sums equal to $(1,1,1,0.6)$ up to permutations of row and columns as follows:
\begin{equation*}
\begin{pmatrix}
1 & 0 & 0 & 0 \\
0 & 1 & 0 & 0 \\
0 & 0 & 1 & 0 \\
0 & 0 & 0 & 0.6
\end{pmatrix},
\begin{pmatrix}
1 & 0 & 0 & 0 \\
0 & 1 & 0 & 0 \\
0 & 0 & 0.6 & 0 \\
0 & 0 &  0.4 & 0.6
\end{pmatrix},
\begin{pmatrix}
1 & 0 & 0 & 0 \\
0 & 0.6 & 0 & 0\\
0 & 0.4 & 0.6 & 0\\
0 &  0 & 0.4 & 0.6
\end{pmatrix},
\begin{pmatrix}
0.6 & 0 & 0 & 0 \\
0.4 & 0.6 & 0 & 0\\
0 & 0.4 & 0.6 & 0\\
0 & 0 & 0.4 & 0.6
\end{pmatrix}.
\end{equation*}
It is easy to see that they are also extremal matrices of $\omega_4^{3.6}$.
\end{example}

Moreover, we can get the following theorem which gives us a description of all the extremal matrices of the set $\omega_n^s$.
\begin{theorem}\label{thm:iffext}
$A\in\mathfrak{E}(\omega_n^s)$ if and only if $A\in\mathfrak{E}_n(R_{n_0},S_{n_0})$, where both $R_{n_0}$ and $S_{n_0}$ can be permuted into $(1,\ldots,1,s-\lfloor s\rfloor, 0,\ldots,0)$.
\end{theorem}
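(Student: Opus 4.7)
The plan is to handle the two directions separately. Both reduce to results already established: the forward direction to Corollary~\ref{Lco1}, and the backward direction to Corollary~\ref{cor:ext}, modulo a routine row/column-permutation argument.

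For the forward direction, I would assume $A\in\mathfrak{E}(\omega_n^s)$ and simply take $R_{n_0}=R(A)$, $S_{n_0}=S(A)$. By Corollary~\ref{Lco1}, both $R(A)$ and $S(A)$ already lie in $\mathcal{RE}(v_n^s)$, so the requirement that they be rearrangements of $(1,\ldots,1,s-\lfloor s\rfloor,0,\ldots,0)$ is automatic. To see that $A\in\mathfrak{E}_n(R(A),S(A))$, observe that $\omega_n(R(A),S(A))\subseteq\omega_n^s$, since every matrix in this subpolytope has total sum $|R(A)|=s$. Consequently any convex decomposition $A=\lambda B+(1-\lambda)C$ with $B,C\in\omega_n(R(A),S(A))$ is also a convex decomposition of $A$ inside $\omega_n^s$; the extremality of $A$ in $\omega_n^s$ then forces $B=C=A$.

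For the backward direction, suppose $A\in\mathfrak{E}_n(R_{n_0},S_{n_0})$ with $R_{n_0}, S_{n_0}$ arbitrary rearrangements of $v_n^s$. I would choose permutation matrices $P$ and $Q$ so that the row sum vector of $PAQ$ is exactly $v_n^s$ and its column sum vector is also $v_n^s$. The map $X\mapsto PXQ$ is an affine bijection of $\omega_n$ that carries $\omega_n(R_{n_0},S_{n_0})$ onto $\omega_n(v_n^s,v_n^s)$ and preserves $\sigma$, hence also preserves $\omega_n^s$ as a set. Since extremality is an intrinsic property of convex sets and is therefore preserved by affine bijections, we get $PAQ\in\mathfrak{E}_n(v_n^s,v_n^s)$. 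Corollary~\ref{cor:ext} then yields $PAQ\in\mathfrak{E}(\omega_n^s)$, and applying the inverse bijection gives $A\in\mathfrak{E}(\omega_n^s)$.

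No genuine obstacle is expected; the theorem is essentially a repackaging of Corollaries~\ref{Lco1} and~\ref{cor:ext}. The only point requiring care is the compatibility of row/column permutations with each of the three polytopes $\omega_n$, $\omega_n^s$, and $\omega_n(R,S)$, which is immediate from the fact that both $\sigma$ and the multisets of row and column sums are permutation invariants. If one wished to avoid invoking Corollary~\ref{Lco1} (which itself rests on Theorem~\ref{thm1}), the forward direction could alternatively be derived directly from Theorem~\ref{main}(\ref{i3}), since condition~(\ref{i3}) forces $R(A),S(A)\in\mathcal{RE}(v_n^s)$ by inspection.
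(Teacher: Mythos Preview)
Your proof is correct, but for the forward direction it takes a genuinely different route from the paper. The backward direction is essentially identical: both you and the paper invoke Corollary~\ref{cor:ext} and absorb the permutations $P,Q$ via the obvious affine bijection $X\mapsto PXQ$.

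The difference lies in the forward implication. You simply quote Corollary~\ref{Lco1} from Section~\ref{sec2} to conclude $R(A),S(A)\in\mathcal{RE}(v_n^s)$, then observe that extremality in $\omega_n^s$ trivially implies extremality in the smaller polytope $\omega_n(R(A),S(A))$. The paper, by contrast, deliberately avoids Corollary~\ref{Lco1}: the whole point of Section~\ref{sec3} is to furnish an \emph{alternative} proof of Theorem~\ref{main}, independent of the perturbation lemmas of Section~\ref{sec2}. Accordingly, the paper re-derives the row/column-sum constraint from scratch, via a case analysis based on the Jurkat--Ryser inductive construction of $\mathfrak{E}_n(R,S)$ (Cases~1, 2a, 2b, 2c in the proof), explicitly exhibiting convex decompositions whenever two row sums lie strictly between $0$ and $1$.

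Your argument is therefore much shorter and logically sound as a proof of the stated theorem, but it defeats the purpose of the section: once you invoke Corollary~\ref{Lco1} (or, as in your closing remark, Theorem~\ref{main}(\ref{i3})), Section~\ref{sec3} no longer constitutes an independent second approach. What the paper's longer argument buys is self-containment and, as noted in the remark following the proof, an explicit algorithm for decomposing an arbitrary $A\in\mathfrak{E}_n(R,S)$ as a convex combination of matrices in $\mathfrak{E}(\omega_n^s)$.
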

\begin{proof}
By Corollary~\ref{cor:ext}, we only need to show that if $A\in\mathfrak{E}(\omega_n^s)$, then both the row sum vector $R(A)$ and the column sum vectors $S(A)$ can be reordered into the vector $(1,\ldots,1,s-\lfloor s\rfloor, 0,\ldots,0)$ with $\lfloor s\rfloor\ 1$'s and $(n-\lceil s\rceil)\ 0$'s. Notice that $A\in\mathfrak{E}(\omega_n^s)$ implies that $A\in\mathfrak{E}_n(R(A),S(A))$, where $R(A)$ and $S(A)$ denote the row and column sum vectors respectively.  Without loss of generality we assume that $A\in\mathfrak{E}_n(R(A),S(A))$ and $R(A)$ cannot be rearranged into $(1,\ldots,1,s-\lfloor s\rfloor, 0,\ldots,0)$. Otherwise we can transpose $A$ and thus interchange $R(A)$ and $S(A)$. We want to show that $A$ is not an extreme point of $\omega_n^s$. Then there exist at least two nonzero entries $r_i$ and $r_{i'}$ such that $r_i, r_{i'}<1$. Two vertices are said to be in the same connected component if there exist some edges connecting these two vertices regardless of the directions.

Case 1. If $r_i$ and $r_{i'}$ are not in the same connected component, then there exist at least two entries $s_j, s_{j'}$ in $S(A)$, which are in the same connected component with $r_i$ and $r_{i'}$ respectively, and both $s_j$ and $s_{j'}$ are less than one. Consider
\begin{align*}
R'&=(r_1, \ldots, r_i+\epsilon_1,\ldots, r_{i'}-\epsilon_1,\ldots,r_n),\\
S'&=(s_1, \ldots, s_j+\epsilon_1,\ldots, s_{j'}-\epsilon_1,\ldots,s_n),
\end{align*}
and
\begin{align*}
R''&=(r_1, \ldots, r_i-\epsilon_2,\ldots, r_{i'}+\epsilon_2,\ldots,r_n),\\
S''&=(s_1, \ldots, s_j-\epsilon_2,\ldots, s_{j'}+\epsilon_2,\ldots,s_n).
\end{align*}
Notice that $|R'|=|R''|=|R(A)|$ and $|S'|=|S''|=|S(A)|$. For sufficiently small $\epsilon_1, \epsilon_2>0$, we can always find $A_1\in\omega_n(R',S')$ and $A_2\in\omega_n(R'',S'')$ such that
\begin{equation*}
A=\lambda A_1+(1-\lambda)A_2, \ \rm{where}\ \lambda=\frac{\epsilon_2}{\epsilon_1+\epsilon_2}>0.
\end{equation*}

Case 2. Suppose $r_i$ and $r_{i'}$ are in the same connected component. Without loss of generality, according to the order of construction of $A$, we assume that $r_i$ is the first row sum entry strictly less than one with an edge from $r_i$ to a column sum entry $s_j$. We divide the case into three subcases as follows.

Case 2a. If the next edge after the one from $r_i$ to $s_j$ is from $r_l$ to $s_j$, then $r_l<1$. In this case, we can perturb $r_i$ and $r_l$ by adding or subtracting small positive numbers without changing the direction of the edges. More specifically, we consider
\begin{equation*}
R'=(r_1,\ldots, r_i+\epsilon_1,\ldots, r_l-\epsilon_1,\ldots, r_n),
\end{equation*}
and
\begin{equation*}
R''=(r_1,\ldots, r_i-\epsilon_2,\ldots, r_l+\epsilon_2,\ldots, r_n),
\end{equation*}
where $|R'|=|R''|=|R|$.
To keep the direction of the edges in the original graph $\mathcal{G}(A)$ unchanged, we need to consider two different cases. If there are no edges ending in $r_i$ before the edge from $r_i$ to $s_j$, $\epsilon_1$ must satisfy $\epsilon_1\leq r_l$ and $r_i+\epsilon_1\leq s_l$. In this case $r_i+r_l\leq s_j$ so we get $\epsilon_1\leq r_l$. If there are edges ending in $r_i$ before the edge from $r_i$ to $s_j$, we denote the $i$th row sum just before the construction of the element $a_{i,j}$ by $\bar{r}_i$. In this case $\bar{r}_i+r_l\leq s_j$. Thus $\epsilon_1$ must satisfy $\epsilon_1+r_i\leq 1, \bar{r}_i+\epsilon_1\leq s_j$ and $\epsilon_1\leq r_l$. We get $\epsilon_1\leq\min\{r_l, 1-r_i\}$. Similarly we can get the restriction on $\epsilon_2$. If there are no edges ending in $r_l$ before the edge from $r_l$ to $s_j$, then $\epsilon_2\leq r_i$. If there exist such edges ending in $r_l$ before the edge from $r_l$ to $s_j$, then $\epsilon_2\leq\min\{r_i, 1-r_l\}$.

We claim that there exists a matrix $A_1\in\mathfrak{E}_n(R',S)$ and a matrix $A_2\in\mathfrak{E}_n(R'',S)$, such that $A=\lambda A_1+(1-\lambda) A_2$ where $\lambda=\frac{\epsilon_2}{\epsilon_1+\epsilon_2}$. Actually, $A_1$ is just the matrix that keep all the other elements in $A$ unchanged except for  $a_{i,j}$ and $a_{l,j}$. The element $a_{i, j}$ is replaced by $a_{i, j}+\epsilon_1$, and $a_{l, j}$ is replaced by $a_{l, j}-\epsilon_1$. Similarly, if we replace $a_{i, j}$ and $a_{l, j}$ by $a_{i, j}-\epsilon_2$ and $a_{l, j}+\epsilon_2$ respectively in $A$, then we get $A_2$. This is all because the $\epsilon_1$ and $\epsilon_2$ that we choose do not change the direction of the edges if we apply the same order of the construction of $A$ on $(R',S)$ and $(R'',S)$, respectively.

Case 2b. If there is an edge from $s_j$ to $r_l$, and $r_l<1$, then we can also perturb $r_i$ and $r_l$ by adding or subtracting small positive numbers without changing the direction of the edges. More specifically, we can also get
\begin{equation*}
R'=(r_1,\ldots, r_i+\epsilon_1,\ldots, r_l-\epsilon_1,\ldots, r_n),
\end{equation*}
and
\begin{equation*}
R''=(r_1,\ldots, r_i-\epsilon_2,\ldots, r_l+\epsilon_2,\ldots, r_n),
\end{equation*}
where $|R'|=|R''|=|R|$. To keep the directions of edges unchanged in the construction of $A_1\in\mathfrak{E}_n(R',S)$ and $A_2\in\mathfrak{E}_n(R'',S)$, we need to give some restrictions on $\epsilon_1$ and $\epsilon_2$. Notice that $r_i+\epsilon_1\leq s_j$ and $\epsilon_1\leq r_l$. Since $r_i+r_l\geq s_j$ which implies $s_j-r_i\leq r_l$, we have $\epsilon_1\leq s_j-r_i$. Also $\epsilon_2$ should satisfy that $\epsilon_2\leq r_i$ and $\epsilon_2\leq 1-r_l$. Therefore we have $\epsilon_2\leq\min\{r_i, 1-r_l\}$. Just as in case 2a, there also exist $A_1\in\mathfrak{E}_n(R',S)$ and $A_2\in\mathfrak{E}_n(R'',S)$ such that $A=\lambda A_1+(1-\lambda) A_2$ where $\lambda=\frac{\epsilon_2}{\epsilon_1+\epsilon_2}$.
Here $A_1$ can be obtained from $A$ via replacing $a_{i,j}$ and $a_{l, j}$ by $a_{i ,j}+\epsilon_1$ and $a_{l, j}-\epsilon_1$, respectively. $A_2$ can be obtained from $A$ by replacing $a_{i,j}$ and $a_{l, j}$ by $a_{i ,j}-\epsilon_2$ and $a_{l, j}+\epsilon_2$, respectively.

Case 2c. If there is an edge from $s_j$ to $r_l$ and $r_l=1$, then we need to consider the value of $s_j$. If $s_j=1$, then we can remove column $j$ and row $l$ from $A$ to get a submatrix $A'$. Note that both the row and column sum vectors of $A'$ contains one less entry $1$ compared with $R_{n_0}$ and $S_{n_0}$. By induction the assumption holds in this case.

If $s_j<1$ and there exists another $s_{j'}<1$ connected with $r_l$ by an edge, then by transposing $A$ it can be turned into either case 2a or case 2b as we discussed before. If $s_j<1$ and there exists another $s_{j'}=1$ connected with $r_l$ by an edge, then it can be reduced into considering the submatrix $A'$ with one less entry $1$ in the row and column sum vectors compared with $R_{n_0}$ and $S_{n_0}$. Such an $A'$ can be obtained by deleting column $j'$ and row $l$ from $A$. Again by induction this case can be proved.
\end{proof}
\begin{remark}
From the proof in the above theorem, for $A\in\mathfrak{E}_n(R,S)$ we can always write $A=\lambda A_1+(1-\lambda) A_2$ where $A_1\in\mathfrak{E}_n(R',S)$, $A_2\in\mathfrak{E}_n(R'',S)$ and $|R'|=|R''|=|R|$. There are more $0$ or $1$ in the entries of $R'$ and $R''$ than those in $R$. Performing this process finitely many times, we can eventually express $A$ as a convex combination of extremal matrices in $\mathfrak{E}_n(R_{n_0},S_{n_0})$.
\end{remark}

\begin{remark}
From Theorem~\ref{thm:extrs} and Theorem~\ref{thm:iffext} we can get the equivalence of (a) and (c) in Theorem \ref{main}.
\end{remark}

\section{Extreme points of $\omega_{n,k}$}\label{sec4}
In this section, we characterize the extreme points of $\omega_{n,k}$ via the extreme points of $\omega_n^s$.
\begin{lemma} Suppose $A\in \mathfrak{E}(\omega_n^s)$, where $n-k < s < n-k+1$. We claim that $A$ can be written as a convex combination of $A_1$ and $A_2$, where $A_1\in \mathfrak{E}(\omega_n^{n-k+1})$ and $A_2\in \mathfrak{E}(\omega_n^{n-k}).$
\end{lemma}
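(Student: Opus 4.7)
The plan is to invoke Theorem~\ref{main}(c) to put $A$ into canonical block form and then carry out the decomposition explicitly on the $B_m$ block, leaving the diagonal/zero part untouched.

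Since $n-k<s<n-k+1$, we have $\lceil s\rceil=n-k+1$ and $\alpha:=s+1-\lceil s\rceil=s-n+k\in(0,1)$. By the equivalence $\eqref{i1}\Leftrightarrow\eqref{i3}$ in Theorem~\ref{main}, there exist permutation matrices $P,Q$ and an integer $1\le m\le n-k+1$ with
\begin{equation*}
PAQ \;=\; I_{n-k+1-m}\,\oplus\, B_m(\alpha)\,\oplus\, O_{k-1}.
\end{equation*}

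The key computational step is the identity
\begin{equation*}
B_m(\alpha) \;=\; \alpha\, I_m \;+\; (1-\alpha)\, J_m,
\end{equation*}
where $J_m$ is the $m\times m$ $(0,1)$-matrix whose only nonzero entries are $1$'s on the subdiagonal. Indeed, both sides carry $\alpha$ on the main diagonal, $1-\alpha$ on the subdiagonal, and zeros elsewhere (in the degenerate case $m=1$ one has $J_1=O_1$). The crucial observation is that $J_m$ is a partial permutation matrix containing exactly $m-1$ ones. Accordingly, define
\begin{align*}
A_1 &:= P^{-1}\bigl(I_{n-k+1-m}\oplus I_m \oplus O_{k-1}\bigr)Q^{-1} \;=\; P^{-1}\bigl(I_{n-k+1}\oplus O_{k-1}\bigr)Q^{-1},\\
A_2 &:= P^{-1}\bigl(I_{n-k+1-m}\oplus J_m \oplus O_{k-1}\bigr)Q^{-1}.
\end{align*}
Then $A=\alpha A_1+(1-\alpha)A_2$ is immediate from the block identity above. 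Both $A_1$ and $A_2$ are partial permutation matrices; $A_1$ has $n-k+1$ ones and $A_2$ has $(n-k+1-m)+(m-1)=n-k$ ones, so $\sigma(A_1)=n-k+1$ and $\sigma(A_2)=n-k$.

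Finally, to promote these to extreme points of the correct $\omega_n^{\,\cdot}$, I would use the fact that every partial permutation matrix $M$ is an extreme point of the whole polytope $\omega_n$ by Mirsky~\cite{MI1959}. Since $\omega_n^{\sigma(M)}\subseteq\omega_n$, any convex decomposition of $M$ inside $\omega_n^{\sigma(M)}$ is a convex decomposition inside $\omega_n$, so $M\in\mathfrak{E}(\omega_n^{\sigma(M)})$. Applied to $A_1$ and $A_2$ this gives $A_1\in\mathfrak{E}(\omega_n^{n-k+1})$ and $A_2\in\mathfrak{E}(\omega_n^{n-k})$, as required. No step here is a serious obstacle: once the canonical form from Theorem~\ref{main} is at hand, the decomposition $B_m(\alpha)=\alpha I_m+(1-\alpha)J_m$ is forced by matching diagonals, and the extremality of partial permutation matrices in the intermediate polytopes is automatic.
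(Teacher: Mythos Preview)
Your proof is correct and essentially identical to the paper's: both invoke Theorem~\ref{main}(c) to reduce to the canonical block form, then split $B_m(\alpha)$ as $\alpha B_m(1)+(1-\alpha)B_m(0)$ (your $I_m$ and $J_m$ are exactly $B_m(1)$ and $B_m(0)$). The only minor difference is that you justify $A_1,A_2\in\mathfrak{E}(\omega_n^{\,\cdot})$ via Mirsky's extremality of partial permutation matrices in $\omega_n$, whereas the paper simply asserts it (implicitly re-applying Theorem~\ref{main} at integer $s$); both routes are valid.
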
\label{lem:ewnk}
\begin{proof}
Due to (c) in Theorem \ref{main},  assume that $A$ is in the form
\begin{equation*}
I_{\lceil s\rceil-m}\oplus B_m(s+1-\lceil s\rceil)\oplus O_{n-\lceil s\rceil},
\end{equation*}
For some $0\leq m\leq \lceil s\rceil$.
Let
\begin{equation*}
A_1=I_{\lceil s\rceil-m}\oplus B_m(1)\oplus O_{n-\lceil s\rceil},
\end{equation*}
and
\begin{equation*}
A_2=I_{\lceil s\rceil-m}\oplus B_m(0)\oplus O_{n-\lceil s\rceil}.
\end{equation*}
Notice that $A_1\in\mathfrak{E}(\omega_n^{n-k+1})$ and $A_2\in \mathfrak{E}(\omega_n^{n-k})$. Let $\lambda=s+1-\lceil s\rceil$, and we get
\begin{equation*}
A=\lambda A_1+(1-\lambda)A_2.
\end{equation*}
\end{proof}
The following theorem is a direct consequence of Lemma~\ref{lem:ewnk}.
\begin{theorem} \label{leilm5.1}
\begin{equation*}
\mathfrak{E}(\omega_{n,k})=\mathfrak{E}(\omega_n^{n-k})\cup \mathfrak{E}(\omega_n^{n-k+1}).
\end{equation*}
\end{theorem}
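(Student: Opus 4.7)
The plan is to prove the two inclusions $\subseteq$ and $\supseteq$ separately. The reverse inclusion is a routine extremality-plus-linearity of $\sigma$ argument, while the forward inclusion is essentially a direct application of Lemma~\ref{lem:ewnk}.

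For the inclusion $\mathfrak{E}(\omega_n^{n-k})\cup\mathfrak{E}(\omega_n^{n-k+1})\subseteq\mathfrak{E}(\omega_{n,k})$, I would fix $A\in\mathfrak{E}(\omega_n^{n-k})$ and take any convex decomposition $A=\lambda B+(1-\lambda)C$ with $B,C\in\omega_{n,k}$ and $\lambda\in(0,1)$. Since every matrix in $\omega_{n,k}$ satisfies $\sigma\geq n-k$ while $\sigma(A)=n-k$, linearity of $\sigma$ forces $\sigma(B)=\sigma(C)=n-k$; hence $B,C\in\omega_n^{n-k}$, and extremality of $A$ inside $\omega_n^{n-k}$ gives $B=C=A$. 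The case $A\in\mathfrak{E}(\omega_n^{n-k+1})$ is symmetric, this time using the upper bound $\sigma\leq n-k+1$ on the sub-defect-$k$ polytope.

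For the inclusion $\mathfrak{E}(\omega_{n,k})\subseteq\mathfrak{E}(\omega_n^{n-k})\cup\mathfrak{E}(\omega_n^{n-k+1})$, let $A\in\mathfrak{E}(\omega_{n,k})$ and set $s=\sigma(A)$. Proposition~\ref{prop:increl}(ii), together with the easy boundary case $s=n-k+1$, guarantees that $A$ is also an extreme point of $\omega_n^s$. If $s=n-k$ or $s=n-k+1$ we are done. Otherwise $n-k<s<n-k+1$, and Lemma~\ref{lem:ewnk} produces $\lambda=s+1-\lceil s\rceil\in(0,1)$ together with $A_1\in\mathfrak{E}(\omega_n^{n-k+1})$ and $A_2\in\mathfrak{E}(\omega_n^{n-k})$ such that $A=\lambda A_1+(1-\lambda)A_2$. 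By the reverse inclusion already established, $A_1,A_2\in\mathfrak{E}(\omega_{n,k})\subseteq\omega_{n,k}$, and $A_1\neq A_2$ because $\sigma(A_1)\neq\sigma(A_2)$; this contradicts the extremality of $A$ in $\omega_{n,k}$, so the intermediate case cannot occur.

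The only subtle point is a bookkeeping one: to invoke Lemma~\ref{lem:ewnk} I need $A$ to be extreme in $\omega_n^s$ rather than merely in $\omega_{n,k}$, and I need $\omega_n^{n-k+1}$ to be contained in $\omega_{n,k}$ so that $A_1$ is admissible in the decomposition. The first is supplied by Proposition~\ref{prop:increl}(ii) and the boundary observation; the second reflects the paper's treatment of $\omega_{n,k}$ as the closed convex sub-polytope of matrices with $n-k\leq\sigma\leq n-k+1$. Beyond these clarifications the proof is immediate from linearity of $\sigma$ and the extremality hypotheses.
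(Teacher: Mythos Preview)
Your argument is correct and follows essentially the same route the paper intends: the forward inclusion is exactly the contradiction via Lemma~\ref{lem:ewnk} that the paper has in mind when it calls the theorem a ``direct consequence'' of that lemma, and your reverse inclusion via linearity of $\sigma$ and the sandwich $n-k\le\sigma\le n-k+1$ simply spells out what the paper leaves implicit. Your explicit handling of the closure issue (that $\omega_n^{n-k+1}$ lies on the boundary rather than inside the half-open slice, cf.\ the remark following the theorem) is a worthwhile clarification the paper does not make in the proof itself.
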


\begin{remark}
Note that $\mathfrak{E}(\omega_n^{n-k+1})\subset\mathfrak{E}(\omega_{n,k}),$ but $\omega_n^{n-k+1}\cap\omega_{n,k}=\emptyset$. Indeed, matrices in $\mathfrak{E}(\omega_n^{n-k+1})$ are limit points of $\omega_{n,k}.$
\end{remark}

\section{Some Applications}\label{sec5}
Through these three partitions, some results of doubly stochastic matrices have been extended to doubly substochastic matrices.  Let $A,B\in \Omega_n.$ Denote by $h$ the maximum diagonal function. A consequence of the main result in \cite{Bala1977} is $$1\leq h(A)+h(B)-h(AB)\leq n.$$ Similar inequality can be generalized to the doubly substochastic matrices $C,D \in \omega_{n,k}$ case.
\begin{theorem}[Theorem 3.3, \cite{CCDKL2017}]
Let $A,B\in \omega_{n,k}.$ Then  $$ \frac{n-k}{n} \leq h(A)+h(B)-h(AB)\leq \min\{n, 2(n-k+1)\}.$$
In particular when $k\geq \frac{n}{2}+1,$
$$\sup_{A,B\in \omega_{n,k}} \{h(A)+h(B)-h(AB)\}=2(n-k+1).$$
\end{theorem}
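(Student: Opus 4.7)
The plan is to prove the lower bound, the upper bound, and the attainment of the supremum as three separate steps, using the sub-defect characterization $\sigma(A)\in[n-k,n-k+1)$ for $A\in\omega_{n,k}$ from Theorem~\ref{thm2.1} together with classical facts about the max-diagonal function $h$.

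For the lower bound $h(A)+h(B)-h(AB)\geq (n-k)/n$, I would start with the averaging inequality $h(M)\geq \sigma(M)/n$, valid for any entrywise nonnegative $n\times n$ matrix $M$ (it follows from $\sum_{\pi\in S_n}\sum_j M_{j,\pi(j)}=(n-1)!\,\sigma(M)$). Since $A,B\in\omega_{n,k}$ forces $\sigma(A),\sigma(B)\geq n-k$, this already yields $h(A),h(B)\geq (n-k)/n$. I would then establish the comparison $h(AB)\leq\min\{h(A),h(B)\}$ as follows: completing $A$ to a doubly stochastic matrix $\tilde A\in\Omega_{n+k}$ by Theorem~\ref{thm2.1}, applying Birkhoff to $\tilde A$, and restricting back to the upper-left $n\times n$ block gives a decomposition $A=\sum_i\alpha_i Q_i$ with $\alpha_i\geq 0$, $\sum_i\alpha_i=1$, and each $Q_i$ an $n\times n$ partial permutation matrix. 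For any permutation $\pi$, the identity $\sum_j(AB)_{j,\pi(j)}=\sum_i\alpha_i\sum_j(Q_iB)_{j,\pi(j)}$ writes the diagonal of $AB$ as a convex combination of partial diagonals of $B$, each of which is bounded above by $h(B)$ (extending the partial permutation to a full one can only add nonnegative entries). Combining, $h(A)+h(B)-h(AB)\geq\max\{h(A),h(B)\}\geq (n-k)/n$.

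For the upper bound, the half $h(A)+h(B)-h(AB)\leq 2(n-k+1)$ is immediate from the elementary inequality $h(M)\leq\sigma(M)<n-k+1$ (each entry along a diagonal is bounded by its row sum) together with $h(AB)\geq 0$. The other half, $h(A)+h(B)-h(AB)\leq n$, is the substochastic analogue of the Balasubramanian inequality and is where I expect the main obstacle. My plan is to lift $A$ and $B$ to doubly stochastic completions $\tilde A,\tilde B\in\Omega_{n+k}$, invoke $h(\tilde A)+h(\tilde B)-h(\tilde A\tilde B)\leq n+k$ from \cite{Bala1977}, and then book-keep the contribution of the $k$ padded coordinates. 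The delicate point is that the completion is not canonical and $h$ does not split cleanly across the block decomposition, so one likely has to engineer a completion satisfying $h(\tilde A)=h(A)+k$ and $h(\tilde B)=h(B)+k$ (for instance a block-diagonal padding whose bottom-right $k\times k$ block contains a permutation) while simultaneously forcing $h(\tilde A\tilde B)\geq h(AB)+k$.

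Finally, for the supremum claim, observe that $k\geq n/2+1$ forces $2(n-k+1)\leq n$, so the binding upper bound is $2(n-k+1)$. To see it is sharp I would exhibit a one-parameter family $(A_\epsilon,B_\epsilon)$ saturating it in the limit. Let $A_\epsilon=I_{n-k}\oplus B_1(1-\epsilon)\oplus O_{k-1}$, an $n\times n$ diagonal matrix with $n-k$ ones, one entry $1-\epsilon$, and $k-1$ zeros; then $\sigma(A_\epsilon)=n-k+1-\epsilon\in[n-k,n-k+1)$, so $A_\epsilon\in\omega_{n,k}$, and $h(A_\epsilon)=n-k+1-\epsilon$. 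Let $B_\epsilon$ be the analogous block of $n-k$ ones and one entry $1-\epsilon$ placed along the rows $n-k+2,\ldots,2(n-k+1)$ into columns $1,\ldots,n-k+1$; the construction fits inside an $n\times n$ matrix precisely because $k\geq n/2+1$ gives $2(n-k+1)\leq n$. The column-support of $A_\epsilon$ is $\{1,\ldots,n-k+1\}$ while the row-support of $B_\epsilon$ is $\{n-k+2,\ldots,2(n-k+1)\}$, and these are disjoint, so $A_\epsilon B_\epsilon=0$. Hence $h(A_\epsilon)+h(B_\epsilon)-h(A_\epsilon B_\epsilon)=2(n-k+1-\epsilon)\to 2(n-k+1)$ as $\epsilon\to 0^+$, matching the supremum.
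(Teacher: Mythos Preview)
The paper does not contain a proof of this statement. It appears in Section~\ref{sec5} (Applications) purely as a quoted result from the external reference~\cite{CCDKL2017}, with no accompanying argument in this paper. There is therefore no ``paper's own proof'' against which your proposal can be compared.

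That said, a few remarks on the proposal itself. Your arguments for the lower bound and for the supremum construction are essentially complete and correct: the inequality $h(AB)\leq\min\{h(A),h(B)\}$ via the partial-permutation decomposition of $A$ (or of $B$) is sound, and your disjoint-support family $(A_\epsilon,B_\epsilon)$ for the supremum is clean and fits inside an $n\times n$ matrix precisely under the hypothesis $k\geq n/2+1$. The genuine gap is the half of the upper bound asserting $h(A)+h(B)-h(AB)\leq n$. You yourself flag this as the ``main obstacle,'' and your plan of lifting to doubly stochastic completions $\tilde A,\tilde B\in\Omega_{n+k}$ and invoking the Balasubramanian inequality does not close it: neither $h(\tilde A)=h(A)+k$ nor $h(\tilde A\tilde B)\geq h(AB)+k$ follows automatically from an arbitrary completion, and arranging both simultaneously---especially the second, which concerns the product of two independently chosen completions---is not obviously possible. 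As written, then, the proposal is an outline with one substantive step left unfinished.
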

Denote by $per(A)$ the permanent of $A.$ The following results were given in \cite{CCKL2017} and \cite{CC2017} respectively.
\begin{lemma}[Lemma 2.2, \cite{CCKL2017}]
If $A\in\omega_{n,k}$, then
\begin{equation*}
0\leq per(A)\leq \{\frac{\sigma(A)}{n}\}^n<\left(\frac{n-k+1}{n}\right)^n.
\end{equation*}
\end{lemma}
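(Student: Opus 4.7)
The displayed chain $0 \leq per(A) \leq (\sigma(A)/n)^n < ((n-k+1)/n)^n$ naturally splits into three pieces, and my plan is to handle each in turn, with all the work concentrated in the middle inequality. The leftmost bound $0 \leq per(A)$ is immediate because $A$ has nonnegative entries. The rightmost strict bound $(\sigma(A)/n)^n < ((n-k+1)/n)^n$ is essentially a restatement of the sub-defect hypothesis via Theorem~\ref{thm2.1}: the condition $A \in \omega_{n,k}$ is equivalent to $\lceil n - \sigma(A)\rceil = k$, which forces $\sigma(A) \in [n-k, n-k+1)$, so $\sigma(A)/n < (n-k+1)/n$, and raising both sides of this inequality between nonnegative numbers to the $n$th power preserves the strict inequality (the boundary case $\sigma(A)=0$, which can only happen when $k=n$, still satisfies $0 < (1/n)^n$).

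The substantive step is the middle bound $per(A) \leq (\sigma(A)/n)^n$, which I plan to obtain through the intermediate estimate $per(A) \leq \prod_{i=1}^n r_i(A) \leq (\sigma(A)/n)^n$, where $r_i(A)$ denotes the $i$th row sum of $A$. The second inequality here is just the AM--GM inequality applied to the nonnegative quantities $r_1(A),\ldots,r_n(A)$ whose sum is $\sigma(A)$. For the first inequality $per(A) \leq \prod_i r_i(A)$ I would proceed by induction on $n$: the base $n=1$ gives $per(A) = a_{1,1} = r_1(A)$, and for $n \geq 2$ I would Laplace-expand along the first row, writing $per(A) = \sum_{j=1}^n a_{1,j}\, per(A^{(1,j)})$, where $A^{(1,j)}$ is the $(n-1)\times(n-1)$ minor obtained by deleting row $1$ and column $j$. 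Since the row sums of $A^{(1,j)}$ satisfy $r_i(A^{(1,j)}) = r_i(A) - a_{i,j} \leq r_i(A)$ for $i \neq 1$, the inductive hypothesis yields $per(A^{(1,j)}) \leq \prod_{i\neq 1} r_i(A)$; multiplying by $a_{1,j}$ and summing over $j$ then produces the factor $r_1(A)$ out front and completes the induction.

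I expect essentially no real obstacle here: the inductive argument does not even use the doubly substochastic structure (only nonnegativity of the entries), and the substochastic hypothesis enters only in the final strict inequality through Theorem~\ref{thm2.1}. Consequently none of the extreme point machinery developed for $\omega_n(R,S)$, $\omega_n^s$, or $\omega_{n,k}$ is required; the result is a clean consequence of Laplace expansion together with AM--GM, and the only point worth stating explicitly in the write-up is the verification that $\lceil n-\sigma(A)\rceil = k$ delivers the sharp estimate $\sigma(A) < n-k+1$.
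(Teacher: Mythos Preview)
Your argument is correct. The paper itself does not supply a proof of this lemma: it is quoted in the applications section as Lemma~2.2 of the cited preprint \cite{CCKL2017}, so there is no in-paper proof to compare against. Your route via $per(A)\le\prod_i r_i(A)$ (Laplace expansion plus induction on $n$) followed by AM--GM is the standard and efficient one; as you observe, the doubly substochastic hypothesis is used only at the last step, where Theorem~\ref{thm2.1} converts $sd(A)=k$ into $\sigma(A)\in[n-k,\,n-k+1)$ and hence into the strict upper bound. One cosmetic point: the inequality $per(A)\le\prod_i r_i(A)$ for nonnegative matrices is classical (it appears already in Jurkat--Ryser \cite{TR1} and in Minc's permanent surveys), so in a polished write-up you could simply cite it rather than rederive it, but the self-contained induction you give is short enough that either choice is fine.
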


\begin{theorem}[Theorem 3.1, \cite{CC2017}]\label{main1}
Let $A\in \omega_n$, satisfying either
\begin{enumerate}
\item $n$ is even, or
\item $n$ is odd and $\sigma(A)\leq n-1$.
\end{enumerate}
Let $\sigma(A)=s$ and denote by $t$ the greatest even integer less than or equal to $s$. Then
\begin{equation*}
\max \{{\rm per}(I-A)\ |\ A\in\omega_n^s \}= 2^{t/2}\left[1+\left(\frac{s-t}{2}\right)^2\right].
\end{equation*}
\end{theorem}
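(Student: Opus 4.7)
The plan is to exhibit an explicit maximizer $A^\ast\in\omega_n^s$ achieving the claimed value, and then prove the matching upper bound. A preliminary remark is essential: the map $A\mapsto\mathrm{per}(I-A)$ is multi-affine in the entries of $A$, hence continuous, so the maximum on the compact polytope $\omega_n^s$ exists. It is \emph{not} a convex function, however---already for $(n,s)=(2,1)$ the value $5/4$ is realized at the interior point $\bigl(\begin{smallmatrix}0 & 1/2\\1/2 & 0\end{smallmatrix}\bigr)$, not at an extreme point of $\omega_2^1$ (where $\mathrm{per}(I-A)\in\{0,1\}$). So Theorem~\ref{main} cannot directly reduce the problem to a finite check.

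\textbf{Step 1 (Candidate).} With $\beta=(s-t)/2\in[0,1)$, set
$$A^\ast \;=\; \underbrace{C\oplus C\oplus\cdots\oplus C}_{t/2\text{ copies}}\;\oplus\; C_\beta\;\oplus\; 0_{n-t-2},\qquad C=\begin{pmatrix}0 & 1\\ 1 & 0\end{pmatrix},\quad C_\beta=\begin{pmatrix}0 & \beta\\ \beta & 0\end{pmatrix}$$
(when $\beta=0$, omit $C_\beta$ and enlarge the trailing zero block by $2$). The parity hypothesis ensures $n-t-2\ge 0$; direct inspection gives $\sigma(A^\ast)=t+2\beta=s$ and $A^\ast\in\omega_n$. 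Since $I-A^\ast$ is block diagonal, $\mathrm{per}(I-A^\ast)=\big(\mathrm{per}(I_2-C)\big)^{t/2}\cdot\mathrm{per}(I_2-C_\beta)\cdot\mathrm{per}(I_{n-t-2})=2^{t/2}(1+\beta^2)$, matching the formula.

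\textbf{Step 2 (Upper bound).} I would prove $\mathrm{per}(I-A)\le 2^{t/2}(1+\beta^2)$ for every $A\in\omega_n^s$ by induction on $n$. The base $n=2$ reduces, after the symmetries $(a,d)\leftrightarrow(d,a)$ and $(b,c)\leftrightarrow(c,b)$ of $\omega_2^s$ force an optimizer with $a=d,b=c$, to the one-variable problem $\max_{b\in[0,s/2]}[(1-s/2+b)^2+b^2]=1+s^2/4$ attained at $b=s/2$. The inductive step has two reductions. \emph{(a)~Symmetrization}: $A':=\tfrac12(A+A^T)\in\omega_n^s$, and I would show $\mathrm{per}(I-A')\ge\mathrm{per}(I-A)$ by setting $p(\lambda):=\mathrm{per}(I-\lambda A-(1-\lambda)A^T)$ and proving $p$ is concave on $[0,1]$; since $p(0)=p(1)=\mathrm{per}(I-A)$ and $p(1/2)=\mathrm{per}(I-A')$, concavity gives the inequality. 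Concavity follows from expanding $\mathrm{per}$ by the cycle structure of the indexing permutations and pairing each $\sigma$ with $\sigma^{-1}$ so that their combined contribution is a concave polynomial in $\lambda$. \emph{(b)~Dimension reduction}: For symmetric $A$ with $A_{ii}<1$ for every $i$ (otherwise $\mathrm{per}(I-A)=0$), pick a pair $(i,j)$ and split the permanent by whether the indexing permutation stabilises $\{i,j\}$ setwise:
$$\mathrm{per}(I-A)=\bigl[(1-A_{ii})(1-A_{jj})+A_{ij}^2\bigr]\,\mathrm{per}\bigl((I-A)_{\widehat{\{i,j\}},\widehat{\{i,j\}}}\bigr)+R,$$
where $R$ gathers contributions of permutations moving $i$ or $j$ out of $\{i,j\}$. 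A further symmetrization on the $(n-2)\times(n-2)$ sub-block forces $R\le 0$ in the extremal configuration and $(i,j)$ to satisfy $A_{ii}=A_{jj}=0,\,A_{ij}=1$; the principal minor then lies in $\omega_{n-2}^{s-2}$, to which the inductive hypothesis applies, yielding the recursion $f_n(s)\le 2\cdot f_{n-2}(s-2)=2\cdot 2^{(t-2)/2}(1+\beta^2)=2^{t/2}(1+\beta^2)$.

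\textbf{Main obstacle.} The delicate technical step is the concavity of $p(\lambda)$: a direct expansion shows $p$ is a polynomial of degree up to $n$, and concavity requires a careful combinatorial grouping of its coefficients by cycle type---each $2$-cycle contributes the concave factor $\lambda(1-\lambda)$, while longer cycles must be combined with their inverses $\sigma\leftrightarrow\sigma^{-1}$ into concave higher-degree polynomials. The parity hypothesis ($n$ even, or $n$ odd with $s\le n-1$) is precisely what prevents an unpaired odd-length cycle from producing a sign-indefinite contribution that would spoil concavity. Once the symmetrization lemma is secured, the Laplace-style splitting in (b), the verification that $R\le 0$, and the one-variable greedy ``fill a $2\times 2$ cross first'' optimisation are routine.
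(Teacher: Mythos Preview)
First, note that the present paper does \emph{not} prove this theorem: it is quoted from \cite{CC2017} as an application, so there is no in-paper argument to compare against. That said, your proposal contains a genuine error.

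The key step~2(a)---concavity of $p(\lambda)=\mathrm{per}\bigl(I-\lambda A-(1-\lambda)A^T\bigr)$ on $[0,1]$---is false. Take $n=4$ (even, so within the hypotheses) and let $A$ be the permutation matrix of the $4$-cycle $(1234)$. A direct computation gives
\[
p(\lambda)=2+4\lambda^2(1-\lambda)^2,
\]
so $p''(0)=8>0$ and $p$ is strictly convex near the endpoints. Your justification---pairing $\sigma$ with $\sigma^{-1}$ so that each pair is concave---already fails for self-inverse permutations with more than one $2$-cycle: in the same example, $\sigma=(12)(34)=\sigma^{-1}$ contributes $M_{12}M_{21}M_{34}M_{43}=\lambda^2(1-\lambda)^2$, which is not concave. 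A product of concave factors is not concave, so decomposing by cycle type cannot yield concavity term by term. Your remark that the parity hypothesis ``prevents an unpaired odd-length cycle from \dots spoiling concavity'' is also off: the counterexample has $n$ even and only even cycles.

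In this particular example one still has $p(1/2)\ge p(0)$, so the \emph{conclusion} of the symmetrization step might survive, but it would need a completely different argument; as written, the proof of~(a) is broken. Step~2(b) is also not a proof: the assertion that ``a further symmetrization forces $R\le 0$ in the extremal configuration and $(i,j)$ to satisfy $A_{ii}=A_{jj}=0,\ A_{ij}=1$'' is the heart of the inductive step, and you have not indicated why the cross terms $R$ (which involve products of entries of $I-A$ of mixed signs) should be nonpositive at a maximizer, nor why such a pair $(i,j)$ must exist. The candidate in Step~1 and the $n=2$ base case are correct, but the inductive machinery needs to be rebuilt.
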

Theorem~\ref{main1} can also be rephrased with respect to the sub-defect $k$ as the following corollary.
\begin{corollary} [Corollary 3.2, \cite{CC2017}]
Let $A\in \omega_{n,k}$, where either
\begin{enumerate}
\item $n$ is even, or
\item $n$ is odd and $k>1$.
\end{enumerate}
Denote by $t$ the greatest even integer less than or equal to $n-k+1$. Then
\begin{equation*}
\sup \{{\rm per}(I-A)\ |\ A\in\omega_{n,k}\}= 2^{t/2}\left[1+\left(\frac{n-k+1-t}{2}\right)^2\right].
\end{equation*}
\end{corollary}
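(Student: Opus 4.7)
The plan is to deduce the corollary from Theorem~\ref{main1} by combining it with the decomposition
$$\omega_{n,k}=\bigcup_{s\in[n-k,\,n-k+1)}\omega_n^{s}$$
given by equation~\eqref{eq:nks}. This immediately reduces the task to evaluating
$$\sup_{s\in[n-k,\,n-k+1)}\;\max_{A\in\omega_n^{s}}{\rm per}(I-A),$$
and Theorem~\ref{main1} supplies the inner maximum in closed form.

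First I would check that Theorem~\ref{main1} applies at every $s$ in the interval under the hypotheses of the corollary: for $n$ even this is automatic, while for $n$ odd with $k>1$ each such $s$ satisfies $s<n-k+1\leq n-1$, which is exactly condition (2) of that theorem. Abbreviating $f(s)=2^{t(s)/2}\bigl[1+\bigl(\tfrac{s-t(s)}{2}\bigr)^{2}\bigr]$ with $t(s)$ the greatest even integer $\le s$, I would then compute $\sup_{s\in[n-k,\,n-k+1)}f(s)$ by splitting into two cases according to the parity of $n-k$.

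If $n-k$ is even, then $t(s)\equiv n-k$ on the entire half-open interval, so $f(s)=2^{(n-k)/2}[1+((s-(n-k))/2)^{2}]$ is strictly increasing; the supremum is approached as $s\to(n-k+1)^{-}$ and equals $2^{(n-k)/2}\cdot\tfrac{5}{4}$. In this case the corollary's $t$ is also $n-k$ (the greatest even integer not exceeding the odd number $n-k+1$), so $2^{t/2}[1+((n-k+1-t)/2)^{2}]$ evaluates to $2^{(n-k)/2}\cdot\tfrac{5}{4}$, matching. If $n-k$ is odd, then throughout the interval $t(s)\equiv n-k-1$; $f$ is again increasing and its supremum is $2^{(n-k-1)/2}\cdot 2=2^{(n-k+1)/2}$. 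Here the corollary's $t$ is $n-k+1$, so $2^{t/2}[1+((n-k+1-t)/2)^{2}]=2^{(n-k+1)/2}$, matching as well.

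The real difficulties are purely bookkeeping: keeping straight the two uses of the symbol $t$ (the $s$-dependent quantity from Theorem~\ref{main1} versus the single value determined by $n-k+1$ in this corollary), and explaining why the conclusion must be a supremum rather than a maximum---the extremal limit $s=n-k+1$ produces matrices of sub-defect $k-1$, so by Theorem~\ref{thm2.1} the bound is approached arbitrarily closely but never attained inside $\omega_{n,k}$.
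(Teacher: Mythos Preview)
Your proposal is correct and follows precisely the route the paper indicates: the paper presents this corollary as a direct rephrasing of Theorem~\ref{main1} via the decomposition $\omega_{n,k}=\bigcup_{s\in[n-k,n-k+1)}\omega_n^{s}$ from~\eqref{eq:nks}, and you have carried out the (straightforward but not entirely trivial) bookkeeping that the paper omits. Your parity case split, the verification that the hypotheses of Theorem~\ref{main1} are met throughout the interval, and your observation that the supremum is not attained because $s=n-k+1$ lies outside $\omega_{n,k}$ are all correct and complete the argument.
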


Theorem \ref{main1} is a refinement of the main result in \cite{MM1986}.

%

%
%
%
%
%
%
%
%

\bibliographystyle{plain}

\end{document}